\documentclass[12pt,reqno]{amsart}
\usepackage{fullpage}
\usepackage[top=1.5in, bottom=2in, left=2.5in, right=2.5in]{geometry}
\oddsidemargin = 0.5in \evensidemargin = 0.5in \textwidth = 5.5in

\pdfoutput=1

\usepackage{amsmath,amsthm,amsfonts,amssymb,ifpdf,bm}
\usepackage{amssymb}
\usepackage{amsmath}
\usepackage{amsthm}
\usepackage{graphicx}
\usepackage[all]{xy}
\usepackage{enumerate}
\usepackage{tikz-cd}

\usepackage[utf8]{inputenc}
\usepackage[T1]{fontenc}
\usepackage{mathtools}
\usepackage{lipsum}
\usepackage{enumerate}
\usepackage{setspace}
\usepackage{pst-node}
\usepackage{blindtext}
\usepackage{upgreek}
\usepackage{dsfont}
\usepackage{array}
\usepackage{booktabs}
\usepackage{multirow}
\usepackage{pdflscape}
\usepackage{amsmath}

\usepackage{mathrsfs}


\theoremstyle{plain} 
\newtheorem{theorem}{Theorem}[section]

\newtheorem{lemma}[theorem]{Lemma}
\newtheorem{corollary}[theorem]{Corollary}

\numberwithin{equation}{section}

\theoremstyle{definition} \newtheorem{definition}[theorem]{Definition}
\newtheorem*{definition*}{Definition}

\theoremstyle{plain}

\newtheorem{remark}{Remark}

\usepackage[all]{xy}

\ifpdf
  \usepackage[draft]{hyperref} 
 
 \usepackage{thumbpdf}
\else
  \usepackage[draft]{hyperref}
\fi
\newif\ifdiscs
\discstrue


\newcommand{\ncom}{\newcommand}
\ncom{\mylabel}[1]{{\rm (#1)}\label{#1}}
\ncom{\Hom}{{\textit{Hom}}}
\ncom{\eop}{{\hfill $\Box$}}


\setcounter{tocdepth}{1}

\usepackage{enumitem,longtable}
\setlist[enumerate]{leftmargin=*,label={\rm(\arabic*)}}

\makeatletter
\@namedef{subjclassname@2020}{%
	\textup{2020} Mathematics Subject Classification}
\makeatother

\title[Strict Log-concavity of $k$-coloured Partitions]{Strict Log-concavity of $k$-coloured Partitions}
\keywords{Conjecture of Chern--Fu--Tang, $k$-coloured partitions, strict log-concavity.}
\subjclass[2020]{05A20, 11P99}
\author{Kathrin Bringmann}
\address{Department of Mathematics and Computer Science, Division of Mathematics, University of Cologne, Weyertal 86--90, 50931 Cologne, Germany}
\email{kbringma@math.uni-koeln.de}
\author{Ben Kane}
\address{Department of Mathematics, The University of Hong Kong, Pokfulam, Hong Kong}
\email{bkane@hku.hk}
\author{Anubhab Pahari}
\address{Department of Mathematics, IIT Madras, Chennai, 600036, India.}
\email{ma22d012@smail.iitm.ac.in, anubhabpahari@gmail.com}
\author{Larry Rolen}
\address{Department of Mathematics,
1420 Stevenson Center,
Vanderbilt University,
Nashville, TN 37240, USA}
\email{larry.rolen@vanderbilt.edu}

\begin{document}
\newcommand{\R}{{\mathbb R}}
\newcommand{\C}{{\mathbb C}}
\newcommand{\N}{{\mathbb N}}
\newcommand{\Q}{{\mathbb Q}}
\newcommand{\Z}{{\mathbb Z}}
\newcommand{\K}{{\mathbb K}}
\newcommand{\D}{{\mathbb D}}
\def\Hilb{{\rm Hilb}}
\def\dim{{\rm dim}}
\def\Quot{{\rm Quot}}
\def\min{{\rm min}}
\def\card{{\rm card}}
\newcommand{\disjointunion}{\amalg}


\begin{abstract}
	In recent years, there has been extensive work on inequalities among partition functions. In particular, Nicolas, and independently DeSalvo--Pak, proved that the partition function $p(n)$ is eventually log-concave. Inspired by this and other results, 
    Chern--Fu--Tang first conjectured log-concavity of $k$-coloured partitions. Three of the authors and Tripp later proved this conjecture by introducing recursive sequences and a strict inequality for fractional partition functions, giving explicit errors. In this paper, we show that the log-concavity is, in fact, strict for $k\geq 2$. We shed further light on this phenomenon by utilizing Hardy--Littlewood--P\'olya's notion of majorizing.
	We prove that for partitions $\bm{a},\bm{b}$ of $n\in\N$, if $\bm b$ majorizes $\bm a$, then $p_k(\bm{a})>p_k(\bm{b})$. Numerical calculations indicate that our result is sharp.
\end{abstract}

\maketitle

\section{Introduction and Statement of Results}\label{sec:Introduction}
A \emph{partition} of $n \in \mathbb{N}$ is an $r$-tuple $(n_1,n_2,\cdots,n_r)\in \mathbb{N}^r$ with $r\le n$ such that $ n_1\geq n_2\geq \cdots\geq n_r\geq 1$ and $\sum_{j=1}^rn_j=n$. We call $n_j$ the \emph{$j$-th part} of the partition and denote by $p(n)$ the number of partitions of $n$. By Euler, the generating function for $p(n)$ is given by 
\begin{equation*}
    \sum_{n\ge0} p(n)q^n=\prod_{n\ge1} \frac{1}{1-q^n}.
\end{equation*}
 More generally, a \emph{$k$-coloured partition} of $n$ is a partition of $n$ where each part is assigned one of $k$ colours, and $p_k(n)$ counts the number of $k$-coloured partitions of $n$. Note that $p_1(n)=p(n)$. The generating function of $p_k(n)$ is given by
\begin{equation*}
    \sum_{n\ge0}p_k(n)q^n=\prod_{n\ge1}\frac{1}{(1-q^n)^k},
\end{equation*}
where we set $p_k(0):=1$. This identity makes $p_k(n)$ important in combinatorics and in other fields of mathematics. One such application can be found in algebraic geometry.  For a smooth, projective surface $S$, there is a smooth projective variety of dimension $2n$, called the Hilbert scheme of $n$ points over a surface, denoted by $S^{[n]}$. We denote the topological Euler characteristic of $S$ and $S^{[n]}$ by $\chi(S)$ and $\chi(S^{[n]})$, respectively. The generating function of $\chi(S^{[n]})$ is given by (see \cite[equation $(2)$, Theorem 0.1]{G\"ottsche'90}),
\begin{equation*}
    \sum_{n\ge0}\chi\left(S^{[n]}\right)q^n=\prod_{n\ge1}\frac{1}{(1-q^n)^{\chi(S)}}.
\end{equation*}
Thus we have
\begin{equation*}
    \chi\left(S^{[n]}\right)=p_{\chi(S)}(n).
\end{equation*}
To give another example, in their study of supersymmetric gauge theory and random partitions, 
Nekrasov and Okounkov \cite{NO} proved the famous formula
\[
\sum_{\lambda\in\mathcal P}q^{|\lambda|}\prod_{h\in\mathcal H(\lambda)}\left(1-\frac{\alpha}{h^2}\right)=\prod_{n\geq1}(1-q^n)^{\alpha-1}, \quad \alpha\in\C.
\]
Here, $\mathcal P$ denotes the set of all partitions, $|\lambda|$ denotes the size of the partition, and $\mathcal H(\lambda)$ denotes the (multi)set of hook lengths of $\lambda$.
Key combinatorial applications of this formula, as well as extensions and an elementary proof, were given by Han \cite{Han}.

Inspired by Bessenrodt--Ono's proof \cite{BO} that $p(a)p(b) > p(a + b)$ for $a,b\in\N_{>1}$ with $a+b>9$, Chern--Fu--Tang \cite{MR3826751}
showed that the same inequality holds for $p_k(n)$ with finitely many exceptions.
They then speculated a strengthening of this result, inspired by the log-concavity results of Nicolas \cite{Nicolas} and DeSalvo--Pak \cite{DeSalvoPak}.
Specifically, they conjectured that for $1\le\ell<n$ and $k\geq 2$, except for $(k,n,\ell)=(2,6,4)$, we have
\begin{equation*}
    p_k(\ell+1)p_k(n-1)\geq p_k(\ell)p_k(n).
\end{equation*}
Three of the authors and Tripp \cite{MR4287510} proved this conjecture. In this paper, we modify and strengthen this conjecture.
\begin{theorem}\label{st. log con for length 2}
     For $k,n\in\mathbb N$ with $k\geq3$ and $(k,n)\neq(3,1)$, $p_k$ is strictly log-concave.  That is, we have
    \begin{equation*}
        p_k^2(n)>p_k(n-1)p_k(n+1).
    \end{equation*}
    
\end{theorem}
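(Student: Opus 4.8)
The plan is to split the statement into its non-strict content, which is essentially already known, and the strictness, which is where the real work lies.

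\emph{Reduction.} First I would observe that the non-strict inequality is immediate from the Chern--Fu--Tang inequality proved in \cite{MR4287510}: substituting $\ell=n-1$ and replacing the upper argument $n$ of that inequality by $n+1$ yields $p_k(n)p_k(n)\ge p_k(n-1)p_k(n+1)$ for all $k\ge 2$ and $n\ge 2$, the single exceptional triple $(2,6,4)$ being irrelevant once $k\ge 3$. The case $n=1$ I would settle by hand: since $p_k(1)=k$ and $p_k(2)=\binom{k+1}{2}+k=\tfrac{k(k+3)}{2}$, the defect is
\[
p_k(1)^2-p_k(0)p_k(2)=k^2-\tfrac{k(k+3)}{2}=\tfrac{k(k-3)}{2},
\]
which is positive for $k\ge 4$ and vanishes precisely at $k=3$. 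This pins down $(3,1)$ as the unique equality case and reduces the theorem to proving the \emph{strict} inequality for all $k\ge 3$ and $n\ge 2$.

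\emph{Main argument.} To treat all $k\ge 3$ at once, I would pass to the fractional partition functions $p_z(n)$ defined by $\prod_{m\ge 1}(1-q^m)^{-z}=\sum_{n\ge0}p_z(n)q^n$ for real $z>0$, so that $p_k(n)$ is the specialization at the integer $z=k$. The advantage of a continuous parameter is that the mechanism of \cite{MR4287510}---recursive sequences together with an inequality for $p_z$ carrying explicit error terms---produces $p_z(n)^2\ge p_z(n-1)p_z(n+1)$ with a quantified main term and a dominated error, uniformly for $z$ in any range $z\ge 3$. Concretely, I would study the ratio $r_z(n):=p_z(n)/p_z(n-1)$, for which strict log-concavity is equivalent to the strict monotonicity $r_z(n+1)<r_z(n)$. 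Using the convolution recursion $n\,p_z(n)=z\sum_{j=1}^{n}\sigma(j)\,p_z(n-j)$ coming from the logarithmic derivative of the generating function, one obtains an expansion of $r_z(n)$ whose leading behaviour is strictly decreasing in $n$; the theorem then amounts to showing that the correction terms cannot cancel this strict decrease. The explicit error bounds from \cite{MR4287510} are exactly what make this rigorous: for $n$ beyond an effective threshold $N_0(z)$ the strict main term dominates the error, and for the finitely many remaining $n$ one certifies strictness directly from the recursive sequences.

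\emph{Main obstacle.} The hard part is uniformity in the number of colours combined with the tightness of the estimate. Because $(3,1)$ is a genuine equality, the log-concavity defect is small in the regime $k=3$ with $n$ small, so the error estimates inherited from \cite{MR4287510} must be sharp enough to survive there; a lossy bound would fail in exactly the region the theorem cares about. One cannot simply verify the finitely many small-$n$ cases by computation, since there are infinitely many $k$, so passing to the continuous parameter $z$ is what converts an infinite family of checks into a single estimate---at the cost of having to control the $z$-dependence of both the main term and the error uniformly, and of verifying that $N_0(z)$ does not blow up as $z\downarrow 3$. I expect this uniform control of the error, rather than the derivation of the leading term, to be the crux. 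I would also record two complementary remarks: the general majorization statement follows from this length-two case by decomposing $\bm b\succ\bm a$ into Hardy--Littlewood--P\'olya transfers and applying the strict inequality to each elementary transfer, so strictness propagates along the chain; and although a purely combinatorial injection from the pairs counted by $p_k(n-1)p_k(n+1)$ into those counted by $p_k(n)^2$ would also give the result, the equality at $(3,1)$ forces any such injection to have an essentially empty surplus there, which makes a clean bijective argument delicate.
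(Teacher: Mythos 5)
Your reduction of the problem is fine as far as it goes: the non-strict inequality does follow from the Chern--Fu--Tang inequality of \cite{MR4287510} upon setting $\ell=n-1$ and shifting $n\mapsto n+1$, and your hand computation $p_k(1)^2-p_k(0)p_k(2)=\tfrac{k(k-3)}{2}$ correctly isolates $(3,1)$ as the unique equality case at $n=1$. But your main argument has a genuine gap, and you have in fact put your finger on it yourself without resolving it: the uniformity in $k$. The asymptotic result you want to lean on (Theorem 1.2 of \cite{MR4287510}, quoted in the paper as Theorem \ref{thm of BKRT}) only yields strictness once $L\geq 2\alpha^{11}$, i.e.\ for $n$ beyond a threshold that grows like $2k^{11}$. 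So for each $k$ there remain on the order of $k^{11}$ small cases, and as $k\to\infty$ this is an unbounded family of verifications; passing to a continuous parameter $z$ does not remove this, because the threshold $N_0(z)\sim 2z^{11}$ blows up as $z\to\infty$ (not as $z\downarrow 3$, which is the direction you worry about). Your proposal ends with ``I expect this uniform control of the error\ldots to be the crux'' --- which is precisely the step that is missing and that the cited estimates do not supply.

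The paper closes this gap with a different and much more elementary idea that your proposal does not contain: since every integer $k\geq 3$ is of the form $3j_1+4j_2+5j_3$ with $j_i\in\N_0$, and since $p_{k_1+k_2}$ is the convolution of $p_{k_1}$ and $p_{k_2}$ (Lemma \ref{lem:convcolour}) while convolution preserves strict log-concavity (Lemma \ref{convolution lemma}), the whole theorem reduces to the three cases $k\in\{3,4,5\}$ (Lemma \ref{reduce}). Only then does the asymptotic estimate enter, covering $n>2k^{11}+1$ for these three values of $k$; direct computation covers $n$ up to $2\cdot 3^{11}+1$, $2\cdot 10^5$, and $8\cdot 10^5$ respectively; and the remaining middle ranges for $k=4,5$ (up to roughly $10^8$, too large for direct computation) are handled by the recursive bounding sequences $p^{\pm}_{k,\bm d}$ together with the strictness of the lower bound when $d_j<j$ (Lemma \ref{3}). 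Your closing remark that a finite computation cannot work ``since there are infinitely many $k$'' is exactly the obstruction the convolution trick removes, so without that reduction (or a genuinely new uniform estimate, which you do not provide) your argument does not close.
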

We deduce the following.
\begin{corollary}\label{st. log con for ln 2, equv}\hspace{0cm}\\
    {\rm(1)} For $n,\ell \in \mathbb{N}_0$ with $0\le \ell\le n-2$ and $k\in \mathbb{N}_{\ge4}$, we have 
    \begin{equation*}
        p_k(n-1)p_k(\ell+1)> p_k(n)p_k(\ell).
    \end{equation*}
    {\rm(2)} For $k=3$ and $1\le \ell\le n-2$, we have 
    \begin{equation*}
        p_3(n-1)p_3(\ell+1)> p_3(n)p_3(\ell).
    \end{equation*}
 	{\rm(3)} For $k=2$ and $1\le \ell \le n-2$ with $(n,\ell)\neq (6,4)$, we have 
    \begin{equation*}
        p_2(n-1)p_2(\ell+1)> p_2(n)p_2(\ell).
    \end{equation*}
\end{corollary}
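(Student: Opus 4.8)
The plan is to reduce all three inequalities to the strict log-concavity of Theorem~\ref{st. log con for length 2} by means of the telescoping identity
\[
\frac{p_k(\ell+1)\,p_k(n-1)}{p_k(\ell)\,p_k(n)}=\prod_{m=\ell+1}^{n-1}\frac{p_k(m)^2}{p_k(m-1)\,p_k(m+1)},
\]
valid for all $0\le\ell\le n-2$ since each $p_k(m)>0$. Writing $r(m):=p_k(m)/p_k(m-1)$, every factor on the right equals $r(m)/r(m+1)$, so the product collapses and the identity is immediate; it recasts the target inequality $p_k(\ell+1)p_k(n-1)>p_k(\ell)p_k(n)$ as the assertion that this product of log-concavity ratios exceeds $1$. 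It therefore suffices to control, on the window $\ell+1\le m\le n-1$, the sign of $p_k(m)^2-p_k(m-1)p_k(m+1)$, which is exactly the strict log-concavity question at the index $m$.

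For part~(1) let $k\ge4$. Theorem~\ref{st. log con for length 2} makes every factor with $m\ge1$ strictly exceed $1$, the only possible exception $(k,m)=(3,1)$ being irrelevant here; concretely the base factor equals $p_k(1)^2/(p_k(0)p_k(2))=2k/(k+3)$, which is $>1$ exactly when $k\ge4$. Hence for $0\le\ell\le n-2$ every factor exceeds $1$ and so does the product. For part~(2) let $k=3$: now the base factor at $m=1$ equals $1$ (the excluded case $(3,1)$), but the hypothesis $\ell\ge1$ forces $m\ge2$ throughout the window, where Theorem~\ref{st. log con for length 2} again gives factors $>1$, and the product exceeds $1$ as before.

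The genuine obstacle is part~(3), since $k=2$ lies outside the range $k\ge3$ of Theorem~\ref{st. log con for length 2} and $p_2$ is not log-concave at small indices. Here I would combine the strict log-concavity of $p_2$ for large $n$ (established in the paper, cf.\ the abstract) with a direct evaluation of the finitely many small factors from the values $p_2=1,2,5,10,20,36,65,110,\dots$. One checks that, for $m\ge2$, the ratio $p_2(m)^2/(p_2(m-1)p_2(m+1))$ exceeds $1$ for all $m$ except $m=3$, where it equals $1$, and $m=5$, where $p_2(5)^2=1296<1300=p_2(4)p_2(6)$ puts it just below $1$. Since $\ell\ge1$ keeps the index $m=1$ (whose factor is $4/5<1$) out of every window, the product is $>1$ whenever the window contains a factor $>1$ and none below $1$, and it fails to be $>1$ only when the window collapses onto one of the anomalous singletons $\{3\}$ or $\{5\}$; the latter is the recorded exception $(n,\ell)=(6,4)$. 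Isolating these degenerate windows by the finite verification is the crux of part~(3); once that is done, the telescoping identity yields the stated strict inequality in all remaining cases.
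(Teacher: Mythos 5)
For parts (1) and (2) your telescoping identity is exactly the content of the paper's Lemma \ref{eqv reln for st log conc.}, and your application of Theorem \ref{st. log con for length 2}, together with the boundary check $p_k(1)^2/(p_k(0)p_k(2))=2k/(k+3)$, reproduces the paper's argument; these two parts are correct and essentially identical to the published proof.

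Part (3) has a genuine unresolved point. Your own computation exhibits \emph{two} windows on which the telescoped product fails to exceed $1$: the singleton $\{5\}$, which is the recorded exception $(n,\ell)=(6,4)$, and the singleton $\{3\}$, which corresponds to $(n,\ell)=(4,2)$ and gives
\[
p_2(3)^2=100=5\cdot 20=p_2(2)\,p_2(4),
\]
an equality rather than a strict inequality. The pair $(n,\ell)=(4,2)$ satisfies $1\le \ell\le n-2$ and is not the excluded pair $(6,4)$, so your argument does not (and cannot) establish the claimed strict inequality there; you identified the anomalous singleton $\{3\}$ but left it unaccounted for. In fact this shows that statement (3) as printed needs the additional exception $(n,\ell)=(4,2)$, or the inequality must be relaxed to $\geq$ in that case (the original Chern--Fu--Tang formulation with $\geq$ is unaffected); the paper's own one-line proof of (3), which invokes \eqref{eqn:bign} for $n>2^{12}+1$ and a computer check below that threshold, glosses over the same case. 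Two smaller items to tighten: your assertion that every non-singleton window containing $m=5$ has product exceeding $1$ requires the (easy) verification that the deficit $1296/1300$ is compensated by the neighbouring factor $10/9$ at $m=4$ or $4225/3960$ at $m=6$, since your stated criterion (``a factor $>1$ and none below $1$'') does not apply to such windows; and the claim that the factor at $m$ exceeds $1$ for all $m\ge 6$ still rests on a finite computer verification up to $m\le 2^{12}+1$ before \eqref{eqn:bign} takes over, exactly as in the paper.
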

Next, we prove a result which extends Corollary \ref{st. log con for ln 2, equv} to partitions of general length. For this, we require the following definition. 
\begin{definition*}
 For a sequence ${\bm a}=(a_1,a_2,\cdots,a_r)\in \mathbb{N}^r$, we define 
 \begin{equation}\label{pv}
     p_k({\bm a}):=p_k(a_1)p_k(a_2)\cdots p_k(a_r).
 \end{equation}
\end{definition*}
In this notation, Corollary \ref{st. log con for ln 2, equv} states (under the conditions of Corollary \ref{st. log con for ln 2, equv})
\begin{equation}\label{eqn:corinequality}
p_k(n-1,\ell+1)>p_k(n,\ell).
\end{equation}
Noting that $(n-1, \ell+1)$ and $(n, \ell)$ are both partitions of the same length, we next recall a partial ordering on partitions of the same length, known as ``majorization''. Related notions were considered earlier by Muirhead \cite{doi:10.1017/S001309150003460X}, Lorenz \cite{Lorenz}, Dalton \cite{Dalton}, Schur \cite{Schur}, and others. The following particular notation and terminology was given by Hardy, Littlewood, and P\'olya \cite{hardy}, \cite[Subsection 2.18]{hardy1952inequalities}.
\begin{definition}\label{Def: majorisation}
    For partitions ${\bm a}=(a_1,\cdots,a_r)$ and ${\bm b}=(b_1,\cdots,b_r)$ of $n\in\N_0$, we say ${\bm b}$ \textit{majorizes} ${\bm a}$, denoted by ${\bm b}\succeq{\bm a}$, if
    \begin{equation*}
            \sum_{j=1}^\nu a_j\leq \sum_{j=1}^\nu b_j, \text{ for all } \nu \in \{1,2,\cdots,r-1\}. 
    \end{equation*}
If $\bm{b}$ majorizes $\bm{a}$ and $\bm{a}\neq \bm{b}$, then we write $\bm{b}\succ \bm{a}$. 
\end{definition}
Note that the partition $(n, \ell)$ majorizes the partition $(n-1, \ell+1)$, and \eqref{eqn:corinequality} implies that $p_k$ reverses this partial ordering. In our next result, we extend the inequality in \eqref{eqn:corinequality} to other majorizations.
\begin{theorem}\label{thm:general}
Let ${\bm a}=(a_1,a_2,\cdots,a_r)$ and ${\bm b}=(b_1,b_2,\cdots,b_r)$ be partitions of $n\in\N$ with ${\bm b}\succ {\bm a}$. Then we have, for $k\ge 3$, 
    \begin{equation}\label{eqn: gen. ineq.}
      p_k({\bm a}) > p_k({\bm b}).
    \end{equation}
\end{theorem}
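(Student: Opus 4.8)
The plan is to deduce the general majorization inequality from the length-two case, Corollary~\ref{st. log con for ln 2, equv}, by decomposing an arbitrary strict majorization into a chain of elementary ``balancing'' transfers on pairs of parts. The guiding principle is that, by the definition \eqref{pv}, $p_k$ is a symmetric multiplicative functional on sequences, so it depends only on the multiset of parts; and the length-two inequality \eqref{eqn:corinequality} asserts precisely that replacing a pair of parts $(x,y)$ with $x\le y$ and $y\ge x+2$ by the more balanced pair $(x+1,y-1)$ strictly increases the product.

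First I would invoke the classical structure theory of majorization of Hardy--Littlewood--P\'olya \cite{hardy1952inequalities}: if $\bm b\succeq\bm a$ with $\bm b\neq\bm a$, then $\bm a$ can be transformed into $\bm b$ by finitely many transfers, each moving two coordinates closer together. In the present integer-valued setting I would make this self-contained via the following inductive construction. Given a partition $\bm c$ (written in increasing order) with $\bm b\succeq\bm c$ and $\bm c\neq \bm b$, let $i$ be the least index with $c_i<b_i$ and let $j$ be the least index exceeding $i$ with $c_j>b_j$; both exist because $\sum_{s\le\nu}c_s\le\sum_{s\le\nu}b_s$ for $\nu\le r-1$, with equality at $\nu=r$. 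Replacing $c_i$ by $c_i+1$ and $c_j$ by $c_j-1$ (and re-sorting) yields a partition $\bm c'$ of $n$; a direct check of partial sums shows $\bm b\succeq\bm c'\succ\bm c$, while the nonnegative integer $\sum_{\nu=1}^{r-1}\big(\sum_{s\le\nu}b_s-\sum_{s\le\nu}c_s\big)$ strictly decreases. Iterating from $\bm c=\bm a$ therefore produces, after finitely many steps, a chain
\begin{equation*}
\bm a=\bm c^{(0)}\prec \bm c^{(1)}\prec\cdots\prec \bm c^{(m)}=\bm b,\qquad m\ge 1,
\end{equation*}
in which each $\bm c^{(t+1)}$ arises from $\bm c^{(t)}$ by a single balancing pair-transfer.

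Next I would verify that each step strictly increases $p_k$. At a step with transferred pair $(x,y)$ one has $x+1\le y-1$, and since a balancing transfer never decreases the smallest part, every $\bm c^{(t)}$ has all parts $\ge 1$; hence $x\ge 1$. Thus the hypotheses of Corollary~\ref{st. log con for ln 2, equv} are satisfied for all $k\ge 3$ (the case $\ell=0$, admissible only for $k\ge4$, never arises, and the excluded pair $(n,\ell)=(6,4)$ concerns only $k=2$). Since only the two transferred parts change, Corollary~\ref{st. log con for ln 2, equv} gives
\begin{equation*}
\frac{p_k(\bm c^{(t+1)})}{p_k(\bm c^{(t)})}=\frac{p_k(x+1)\,p_k(y-1)}{p_k(x)\,p_k(y)}>1.
\end{equation*}
Multiplying these ratios along the chain, whose length $m$ is at least $1$ because $\bm a\neq\bm b$, yields $p_k(\bm b)>p_k(\bm a)$, proving \eqref{eqn: gen. ineq.}.

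I expect the main obstacle to lie in the combinatorial lemma of the second paragraph: verifying that the chosen pair-transfer keeps the intermediate partition inside the dominance interval $[\bm c,\bm b]$, since the re-sorting of parts after a transfer must be reconciled with the partial-sum definition of majorization, and ties between parts require care in identifying the correct positions $i$ and $j$. Once this connectivity of the dominance order by unit balancing transfers is established, together with the observation that such transfers keep all parts positive (so that the $k=3$ case of Corollary~\ref{st. log con for ln 2, equv} applies at every step), the remainder is a telescoping product and the strictness is automatic from $\bm b\succ\bm a$.
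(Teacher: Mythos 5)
Your proposal is correct and follows essentially the same route as the paper: decompose the majorization $\bm a\prec\bm b$ into a chain of elementary one-unit transfers between two parts and apply Corollary \ref{st. log con for ln 2, equv} to the two affected parts at each step, observing that all parts remain $\geq1$ so the $k=3$ case of the corollary is always applicable. The only difference is organizational: the paper cites the Hardy--Littlewood--P\'olya decomposition into Robin Hood transfers (Lemma \ref{3.2.16}) and then argues separately that at least one transfer in the chain is non-trivial (transfers with $a_j=a_\ell+1$ merely permute the parts and leave $p_k$ fixed), whereas you construct the chain by hand so that every step is strictly balancing.
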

\begin{remark}
The proof of Theorem \ref{thm:general} requires a careful blending of analytic and combinatorial techniques. We use combinatorial properties of majorized partitions to reduce Theorem \ref{thm:general} to a proof of Corollary \ref{st. log con for ln 2, equv}. However, these combinatorial techniques alone do not seem to be sufficient to show Corollary \ref{st. log con for ln 2, equv} directly. Instead, Corollary \ref{st. log con for ln 2, equv} follows by asymptotic arguments from \cite{MR4287510}. As noted above, Corollary \ref{st. log con for ln 2, equv} is a special case of Theorem \ref{thm:general}. This implies that Theorem \ref{thm:general} extends Corollary \ref{st. log con for ln 2, equv}. 
\end{remark}

The paper is organized as follows. In Section \ref{sec:ProofThm4} we prove Theorem \ref{st. log con for length 2} and Corollary \ref{st. log con for ln 2, equv}. Section \ref{sec:Inequality} is devoted the proof of Theorem \ref{thm:general}. We finish this paper with questions for future research in Section \ref{sec:furtherquestions}.

\section*{Acknowledgments}
The first author has received funding from the European Research Council (ERC) under the European Union's Horizon 2020 research and innovation programme (grant agreement No. 101001179).\hspace{.5em}The research of the second author was supported by grants from the Research Grants Council of the Hong Kong SAR, China (project numbers HKU 17314122, HKU 17305923). The third author has received financial support under the scheme, the Prime Minister's Research Fellowship, Govt. of India(PMRF, ID: 2503482).
The fourth author’s work was supported by a grant from the Simons Foundation (853830, LR).

\section{Proofs of Theorem \ref{st. log con for length 2} and Corollary \ref{st. log con for ln 2, equv}}\label{sec:ProofThm4}
We first define convolutions of sequences.
\begin{definition}\label{def: conv}
    Let $\{a_n\}_{n= 0}^N$ and $\{b_n\}_{n= 0}^M$ be sequences of positive integers. Their \emph{convolution}\footnote{This is also sometimes called the Cauchy product.} is the sequence $\{e_n\}_{n=0}^{M+N}$ ($N,M\in \N$) with
    \begin{equation*}
        e_n:=\sum_{j=0}^na_j b_{n-j}.
    \end{equation*}
\end{definition}

The following lemma, which is direct to show, gives the convolution of $p_{k_1}$ and $p_{k_2}$.
\begin{lemma}\label{lem:convcolour}
    The convolution of $p_{k_1}$ and $p_{k_2}$ is $p_{k_1+k_2}$.
\end{lemma}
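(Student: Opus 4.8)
The plan is to prove the identity by comparing coefficients in the product of the two generating functions. Writing $F_{k}(q):=\sum_{n\ge0}p_k(n)q^n=\prod_{m\ge1}(1-q^m)^{-k}$ for the generating function of $p_k$, I would first observe that the Cauchy product of the power series $F_{k_1}(q)$ and $F_{k_2}(q)$ has, as its coefficient of $q^n$, exactly the convolution
\[
\sum_{j=0}^n p_{k_1}(j)\,p_{k_2}(n-j),
\]
which is the $n$-th term of the convolution of $p_{k_1}$ and $p_{k_2}$ in the sense of Definition \ref{def: conv}.

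Next, I would use the Euler-type product formula to evaluate this product directly:
\[
F_{k_1}(q)F_{k_2}(q)=\prod_{m\ge1}\frac{1}{(1-q^m)^{k_1}}\prod_{m\ge1}\frac{1}{(1-q^m)^{k_2}}=\prod_{m\ge1}\frac{1}{(1-q^m)^{k_1+k_2}}=F_{k_1+k_2}(q).
\]
Comparing the coefficient of $q^n$ on both sides then yields
\[
\sum_{j=0}^n p_{k_1}(j)\,p_{k_2}(n-j)=p_{k_1+k_2}(n),
\]
which is exactly the assertion that the convolution of $p_{k_1}$ and $p_{k_2}$ equals $p_{k_1+k_2}$.

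There is essentially no analytic obstacle here, since everything may be carried out at the level of formal power series in $q$, so no convergence considerations are needed. The only point requiring a moment's care is to match Definition \ref{def: conv} to the Cauchy product of power series; once this bookkeeping is in place, the result is immediate. Alternatively, one could give a purely combinatorial proof: a $(k_1+k_2)$-coloured partition of $n$ is determined by the sub-partition using the first $k_1$ colours (a $k_1$-coloured partition of some $j$) together with the sub-partition using the remaining $k_2$ colours (a $k_2$-coloured partition of $n-j$), and summing over $0\le j\le n$ reproduces the same convolution identity.
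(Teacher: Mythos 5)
Your proof is correct. The paper omits the argument entirely, describing the lemma as ``direct to show,'' and your generating-function computation (multiplying the Euler products and comparing coefficients of $q^n$) is precisely the standard verification the authors have in mind; the combinatorial bijection you sketch is an equally valid alternative.
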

The following lemma strengthens an equivalent definition of log-concavity of a sequence of positive integers (originally proven for a sequence of non-negative integers) from \cite{MR926131}. The proof is straightforward, so we omit it here.
\begin{lemma}\label{eqv reln for st log conc.}
    Let $\{a_n\}_{n=0}^N$ be a sequence of positive integers and $r\in\N$. Then the strict inequality
    \begin{equation}\label{in1}
        a_{\ell+1} a_{m-1}>a_{\ell} a_{m}
    \end{equation}
    for all $ r \leq \ell+1 < m \leq N$ is equivalent to the strict inequality,
     \begin{equation}\label{in2}
         a_n^2>a_{n-1}a_{n+1}.
     \end{equation}
     for all $r\leq n\leq N-1$.
\end{lemma}
\begin{proof}
    First assume that \eqref{in1} holds. Then \eqref{in2} follows directly by taking $\ell=n-1$ and $m=n+1$.  Note that, for $1\le r\le \ell+1<m\le N$, we have $n=\ell+1\ge r$ and $n\le N-1$. 
    
    For the other direction, define $\lambda_n:= \frac{a_n}{a_{n-1}}.$ From \eqref{in2}, we have, for $r\le n \le N$,
	\begin{equation*}
		\lambda_n=\frac{a_n}{a_{n-1}}>\frac{a_{n+1}}{a_n}=\lambda_{n+1}.
	\end{equation*}
	 Thus, the sequence $\{\lambda_n\}_{j=r}^{s}$ is strictly decreasing. If we take $r\le \ell+1<m\le N$, then we have $\lambda_{\ell+1}>\lambda_{m}$, i.e.,
	\begin{equation*}
		\frac{a_{\ell+1}}{a_{\ell}}>\frac{a_{m}}{a_{m-1}}.
	\end{equation*}
	So \eqref{in1} holds.
\end{proof}
\begin{remark}
	If $r=1$ in Lemma \ref{eqv reln for st log conc.}, then $a_n$ is called strictly log-concave.
\end{remark} 
The following lemma eases our work. The proof is inspired from \cite[Section 2]{Engel} (see also, \cite[Theorem 1.2]{WANG2008395}).
\begin{lemma}\label{convolution lemma}
    The convolution of two strictly log-concave sequences is strictly log-concave. 
\end{lemma}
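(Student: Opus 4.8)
The plan is to prove that the convolution $\{e_n\}_{n=0}^{M+N}$ of the strictly log-concave sequences $\{a_n\}_{n=0}^{N}$ and $\{b_n\}_{n=0}^{M}$ satisfies $e_n^2>e_{n-1}e_{n+1}$ for all $1\le n\le M+N-1$, by encoding the convolution as a matrix product and applying the Cauchy--Binet formula. The starting point is the classical equivalence (used also in \cite[Section 2]{Engel}) that a positive sequence with no internal zeros is log-concave if and only if its lower-triangular Toeplitz matrix is totally nonnegative of order $2$, meaning that all of its $2\times 2$ minors are nonnegative. Writing $T_c:=(c_{i-j})_{i,j\ge 0}$ (with $c_m:=0$ whenever $m$ lies outside the index range of the sequence) for the Toeplitz matrix of a sequence $\{c_n\}$, the convolution identity $e_n=\sum_j a_j b_{n-j}$ translates into the factorization $T_e=T_aT_b$; all three matrices are lower triangular with finitely many nonzero diagonals, so every entry of the product is a finite sum.

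First I would record the elementary identity that the $2\times 2$ minor of $T_e$ on rows $\{n,n+1\}$ and columns $\{0,1\}$ equals $e_n^2-e_{n-1}e_{n+1}$. Expanding this minor by Cauchy--Binet along $T_e=T_aT_b$ gives $e_n^2-e_{n-1}e_{n+1}=\sum_{m_1<m_2}A(m_1,m_2)B(m_1,m_2)$, where $A(m_1,m_2)$ is the minor of $T_a$ on rows $\{n,n+1\}$ and columns $\{m_1,m_2\}$, and $B(m_1,m_2)$ is the minor of $T_b$ on rows $\{m_1,m_2\}$ and columns $\{0,1\}$. Since $\{a_n\}$ and $\{b_n\}$ are log-concave and positive, the equivalence above makes every $A(m_1,m_2)$ and every $B(m_1,m_2)$ nonnegative, so the entire sum is nonnegative; this recovers (non-strict) log-concavity and already explains why a naive term-by-term pairing of the double sum $\sum_{i,k}a_ia_k(b_{n-i}b_{n-k}-b_{n-1-i}b_{n+1-k})$ is doomed: positivity is a global feature of the Cauchy--Binet expansion, not of its individual brackets, which can be negative even for log-concave $\{b_n\}$.

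The main obstacle is the strict inequality, which does not follow formally from nonnegativity and forces us to exhibit at least one strictly positive summand. For this I would use the consecutive choice $(m_1,m_2)=(m,m+1)$, for which $A(m,m+1)=a_{n-m}^2-a_{n-m-1}a_{n-m+1}$ and $B(m,m+1)=b_m^2-b_{m-1}b_{m+1}$; strict log-concavity makes both factors strictly positive as soon as $n-m\in\{1,\dots,N-1\}$ and $m\in\{1,\dots,M-1\}$, that is, whenever $\max(1,n-N+1)\le m\le\min(M-1,n-1)$. This range is nonempty for $2\le n\le M+N-2$ when $M,N\ge 2$, which settles strictness for all interior $n$.

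It then remains to treat the boundary values of $n$ together with the degenerate cases $M=1$ or $N=1$. Here I would either select a corner term of the expansion, such as $A(0,1)=a_n^2-a_{n-1}a_{n+1}$ paired with $B(0,1)=b_0^2>0$ (strictly positive precisely when $n\le N-1$), or invoke the reversal symmetry $e_n\leftrightarrow e_{M+N-n}$, under which the convolution of the reversed sequences is the reversal of $\{e_n\}$; this reduces the top of the range to the bottom, already handled. Pinning down this short case analysis — guaranteeing a strictly positive term uniformly over $1\le n\le M+N-1$ — is the only genuinely delicate point, the nonnegativity being automatic from total positivity.
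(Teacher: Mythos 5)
Your proposal is correct, and it is worth noting that the paper itself gives no proof of this lemma at all --- it is dismissed as ``direct'' with citations to Engel and to Wang--Zhang, so there is no in-paper argument to compare against. Your route via the factorization $T_e=T_aT_b$ of lower-triangular Toeplitz matrices and the Cauchy--Binet expansion of the minor $e_n^2-e_{n-1}e_{n+1}$ is a clean and standard way to make the ``direct'' proof precise; the cited references argue in essentially this circle of ideas (total nonnegativity of order $2$, equivalently a grouping of the terms of the double sum into nonnegative $2\times2$ determinants), so you are not really diverging from the intended proof, just writing it out. All the pieces check: the zero-padded Toeplitz matrix of a positive log-concave sequence is indeed $\mathrm{TN}_2$ (out-of-range indices kill the anti-diagonal product, and in-range minors reduce to the monotonicity of $a_n/a_{n-1}$), the minor on rows $\{n,n+1\}$ and columns $\{0,1\}$ is exactly $e_n^2-e_{n-1}e_{n+1}$, and your identification of a strictly positive summand is correct. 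Two small remarks. First, the corner term is strictly positive for $1\le n\le N$, not just $n\le N-1$, since $A(\{n,n+1\},\{0,1\})=a_N^2>0$ at $n=N$ (as $a_{N+1}=0$); together with the reflected corner term $(m_1,m_2)=(n-N,n-N+1)$, which gives $a_N^2\bigl(b_{n-N}^2-b_{n-N-1}b_{n-N+1}\bigr)>0$ for $N+1\le n\le M+N-1$, this disposes of all boundary cases and the degenerate cases $M=1$ or $N=1$ without needing the reversal symmetry, so the ``delicate'' case analysis you flag closes completely. Second, for the application in the paper the sequences are $p_{k_i}(n)$ restricted to $n\le N$, which are positive, so the no-internal-zeros hypothesis you invoke is automatic.
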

\begin{proof}
     Let $\{a_j\}_{j=0}^N$ and $\{b_j\}_{j=0}^M$ be two strictly log-concave sequences. This means by definition, 
\begin{equation*}
	a_j^2>a_{j-1}a_{j+1} \text{ for } 1\le j \le N-1,\quad b_j^2>b_{j-1}b_{j+1} \text{ for } 1\le j \le M-1.
\end{equation*}
   So, by Lemma \ref{eqv reln for st log conc.}, we have
\begin{equation}\label{eq: st log eqv 1}
    \begin{split}
         a_{\ell+1}a_{m-1}&>a_\ell a_m \text{ for } 1 \le \ell+1 < m \le N,\\
        b_{\ell+1}b_{m-1}&>b_\ell b_m \text{ for } 1 \le \ell+1 < m \le M.
    \end{split}
\end{equation}
We next let $m=\ell+1$ and $\ell= m-1$, to get
\begin{equation}\label{eq: st log eqv 2}
    \begin{split}
         a_\ell a_m&>a_{\ell+1}a_{m-1} \text{ for } 1 \le m < \ell+1 \le N,\\
         b_\ell b_m&>b_{\ell+1}b_{m-1} \text{ for } 1 \le m < \ell+1 \le M.
    \end{split}
\end{equation}
   
    Following the definition of convolution, we let
    \begin{equation}\label{eq: conv}
        e_k \coloneqq \sum_{m = 0}^k a_m b_{k-m} \text{ for } 0\le k\le N+M.
    \end{equation}
    To prove that $\{e_k\}_{k=0}^{N+M}$ is strictly log-concave, we need to show that
    \begin{equation*}
        e_k^2> e_{k-1}e_{k+1} \text{ for } 1\le k \le N+M-1.
    \end{equation*}
    Plugging this into \eqref{eq: conv}, we need to show that
    \begin{equation}\label{eq: st log concav for conv}
        e_k^2=\sum_{0 \le \ell,m \le k} a_\ell b_{k-\ell}a_mb_{k-m} > \sum_{\substack{0\le \ell \le k-1\\0\le m\le k+1}} a_\ell b_{k-\ell-1}a_mb_{k-m+1}=e_{k-1}e_{k+1}.
    \end{equation}
    
    We first assume that $m > \ell-1$.
    Note that, 
    \begin{equation*}
        m>\ell+1  \Rightarrow k- m +1< k-\ell .
    \end{equation*}
    So if $m>\ell+1$, then, by \eqref{eq: st log eqv 1}, we have, 
    \begin{equation*}
        a_\ell a_m-a_{\ell+1}a_{m-1}<0 \text{ and } b_{k-\ell}b_{k-m}-b_{k-\ell-1}b_{k-m+1}<0.
    \end{equation*}
   This implies that
   \begin{equation*}
       (a_\ell a_m -a_{\ell+1}a_{m-1})(b_{k-\ell}b_{k-m}-b_{k-\ell-1}b_{k-m+1})>0.
   \end{equation*}
   
   Next assume that $m<\ell+1$. Note that,
   \begin{equation*}
       m<\ell+1  \Rightarrow k- m +1> k-\ell.
   \end{equation*}
       If $m<\ell+1$, then, by \eqref{eq: st log eqv 2}, we have
\begin{equation*}
        a_\ell a_m-a_{\ell+1}a_{m-1}>0 \text{ and } b_{k-\ell}b_{k-m}-b_{k-\ell-1}b_{k-m+1}>0.
    \end{equation*}
    Again, this implies that
   \begin{equation*}
       (a_\ell a_m-a_{\ell+1}a_{m-1})(b_{k-\ell}b_{k-m}-b_{k-\ell-1}b_{k-m+1})>0.
   \end{equation*}
   Thus if $m\ne \ell+1$, then we have  
    \begin{equation}\label{eq: gen ineq for conv}
        \begin{split}
            (a_\ell a_m-a_{\ell+1}a_{m-1})(b_{k-\ell}b_{k-m}-b_{k-\ell-1}b_{k-m+1})>0.
        \end{split}
    \end{equation}
    Next note that if $m=\ell+1$, then we have
    \begin{equation*}
        (a_\ell a_m-a_{\ell+1}a_{m-1})(b_{k-\ell}b_{k-m}-b_{k-\ell-1}b_{k-m+1})=0.
    \end{equation*}
    Note that by our convention, we let $a_k=b_k=0$ for $k<0$.
    
    Now, if we sum \eqref{eq: gen ineq for conv} over $-1\le \ell,m\le k+1$ on both sides, we get
    \begin{align}\label{eq: summation over ineq}
            & \sum_{-1 \le \ell,m \le k+1} (a_\ell a_m-a_{\ell+1}a_{m-1})(b_{k-\ell}b_{k-m}-b_{k-\ell-1}b_{k-m+1})>0 \nonumber\\
            &\Rightarrow \sum_{-1 \le \ell,m \le k+1} \left(a_\ell b_{k-\ell}a_mb_{k-m}+a_{\ell+1}b_{k-\ell-1} a_{m-1}b_{k-m+1}\right) \nonumber\\ 
            &\hspace{2.5cm} > \sum_{-1 \le \ell,m \le k+1} \left(a_\ell b_{k-\ell-1} a_mb_{k-m+1}+a_{\ell+1}b_{k-\ell}a_{m-1}b_{k-m}\right).
    \end{align}
     We first rewrite the left-hand side as
    \begin{flalign}\label{eq: LHS}
    	& \sum_{-1 \le \ell,m \le k+1}\left(a_\ell b_{k-\ell}a_mb_{k-m}+a_{\ell+1}b_{k-\ell-1} a_{m-1}b_{k-m+1}\right)\nonumber && \\ 
    	&\hspace{3cm}= \sum_{0 \le \ell,m \le k} a_\ell b_{k-\ell}a_mb_{k-m}+\sum_{\substack{-1\le\ell\le k-1\\1\le m\le k+1}} a_{\ell+1}b_{k-\ell-1} a_{m-1}b_{k-m+1}\nonumber && \\ 
    	&\hspace{3cm}= 2 \sum_{0 \le \ell,m \le k} a_\ell b_{k-\ell}a_mb_{k-m}  &&
    \end{flalign}
    by replacing $m\mapsto m+1$ and $\ell \mapsto \ell-1$ in the second sum.

    We next rewrite the right-hand side of \eqref{eq: summation over ineq}. We have
    \begin{align}\label{eq: RHS}
             \sum_{-1 \le \ell,m \le k+1} &\left(a_\ell b_{k-\ell-1}  a_mb_{k-m+1}+a_{\ell+1}b_{k-\ell}a_{m-1}b_{k-m}\right) \nonumber\\ &\hspace{2cm}= \sum_{\substack{0\le \ell \le k-1\\ 0\le m \le k+1}} a_\ell b_{k-\ell-1} a_mb_{k-m+1}+\sum_{\substack{-1\le\ell\le k\\1\le m\le k}} a_{\ell+1}b_{k-\ell}a_{m-1}b_{k-m} \nonumber\\ &\hspace{2cm}= 2 \sum_{\substack{0\le \ell \le k-1\\0\le m\le k+1}} a_\ell b_{k-\ell-1} a_mb_{k-m+1},
    \end{align}
    by replacing $m\mapsto \ell+1$ and $\ell\mapsto m-1$ in the second sum. Using \eqref{eq: summation over ineq}, \eqref{eq: LHS}, and \eqref{eq: RHS}, we obtain \eqref{eq: st log concav for conv}.
\end{proof}

From this we conclude following lemma. 
\begin{lemma}\label{reduce}
	Theorem \ref{st. log con for length 2} holds if it holds for $k\in\{3,4,5\}$ $(n>1~\text{if}~k=3)$.
\end{lemma}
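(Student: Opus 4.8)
The plan is to run a strong induction on the number of colours $k$, exploiting the semigroup structure of convolution. By Lemma \ref{lem:convcolour} we have $p_{k_1+k_2}=p_{k_1}*p_{k_2}$, and by Lemma \ref{convolution lemma} a convolution of strictly log-concave sequences is again strictly log-concave. Since every integer $k\ge 3$ is a sum of elements of $\{3,4,5\}$, this should let me bootstrap strict log-concavity of $p_k$ for all $k\ge 6$ out of the three base sequences $p_3,p_4,p_5$. Concretely, for $k\ge 8$ I would split $k=4+(k-4)$ with $k-4\ge 4$, so that both factors are (by the inductive hypothesis) strictly log-concave, and then apply Lemma \ref{convolution lemma}; the residual small cases $k\in\{6,7\}$ are the only ones whose decompositions into parts $\ge 3$ are forced to use the factor $p_3$.

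First I would dispose of the boundary index $n=1$ for every $k\ge 4$ by a direct computation. Since $p_k(0)=1$, $p_k(1)=k$, and $p_k(2)=k+\binom{k+1}{2}=\tfrac{k(k+3)}{2}$, one finds
\[
p_k(1)^2-p_k(0)p_k(2)=k^2-\frac{k(k+3)}{2}=\frac{k(k-3)}{2},
\]
which is strictly positive exactly when $k\ge 4$ and vanishes precisely when $k=3$. This simultaneously settles the case $n=1$ for all $k\ge 4$ and explains the single exception $(k,n)=(3,1)$ in Theorem \ref{st. log con for length 2}. For $n\ge 2$ the sequence $p_3$ is genuinely strictly log-concave (this is exactly the assumed case $k=3$, $n>1$), so on this range the convolution machinery applies without the defect; together with the case $n=1$ this is meant to recover full strict log-concavity for all $n\ge 1$, passing between the two equivalent forms via Lemma \ref{eqv reln for st log conc.} when convenient.

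The main obstacle is the single failure of strict log-concavity of $p_3$ at $n=1$ and its potential propagation under convolution. For $k=6=3+3$ both factors are defective at $n=1$, so Lemma \ref{convolution lemma} cannot be invoked verbatim; here I would either verify strict log-concavity of $p_6$ directly for small $n$ and control the large-$n$ behaviour through the convolution, or upgrade Lemma \ref{convolution lemma} to the statement that the convolution of a (weakly) log-concave sequence with a strictly log-concave one is strictly log-concave. The latter strengthening removes the defect as soon as a single strictly log-concave factor (such as $p_4$ or $p_5$) is present, and so reduces every remaining case to the isolated verification at $k=6$, after which the induction closes for all $k\ge 6$.
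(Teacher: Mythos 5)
Your reduction is in essence the paper's: both rest on the fact that every $k\geq 3$ is a non-negative integer combination of $3,4,5$ (the paper writes $k=3j_1+4j_2+5j_3$ outright instead of inducting via $k=4+(k-4)$, which is cosmetic), both combine Lemma \ref{lem:convcolour} with Lemma \ref{convolution lemma}, and both dispose of $n=1$ by the identical computation $p_k(1)^2-p_k(0)p_k(2)=\frac{k(k-3)}{2}$. The obstacle you isolate is also real: since $p_3(1)^2=9=p_3(0)p_3(2)$, the sequence $p_3$ is not strictly log-concave, so Lemma \ref{convolution lemma} cannot be invoked verbatim for any decomposition containing a part equal to $3$. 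The paper's one-line proof is silent on this point, so your attention to it is a genuine improvement in care.

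The gap is that your treatment of $k=6$ is not a proof. The only decomposition of $6$ into parts from $\{3,4,5\}$ is $3+3$, so your proposed upgrade of Lemma \ref{convolution lemma} (weakly log-concave convolved with strictly log-concave is strictly log-concave) --- which you assert rather than prove --- does not reach $p_6$, and the fallback ``verify $p_6$ directly for small $n$ and control the large-$n$ behaviour through the convolution'' is circular, since the convolution lemma gives you nothing for large $n$ either. What actually closes the case is a local refinement of Lemma \ref{convolution lemma}: for positive log-concave sequences $a,b$ with convolution $e$, applying Cauchy--Binet to the product of the associated Toeplitz matrices expresses $e_n^2-e_{n-1}e_{n+1}$ as a sum, over pairs of column indices $k<l$, of products of a $2\times 2$ minor of $a$ with a $2\times 2$ minor of $b$, each factor non-negative by log-concavity; the pair $(k,l)=(0,1)$ contributes the term $\left(a_n^2-a_{n-1}a_{n+1}\right)b_0^2$, so $e$ is strictly log-concave at every index $n$ at which $a$ is. Taking $a=b=p_3$ gives strict log-concavity of $p_6$ for all $n\geq 2$, with $n=1$ covered by your direct computation, and the same refinement supplies the unproven ``upgraded lemma'' you need for $k=7=3+4$. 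With that statement proved your induction closes; without it, the cases $k=6$ and $k=7$ remain open.
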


\begin{proof}
	Note that one can write $k\in \mathbb{N}_{\geq 3}$ as
	\begin{equation*}
	k=3j_1+4j_2+5j_3,
	\end{equation*}
	with $j_1,j_2,j_3 \in \mathbb{N}_{0}$. Hence, by Lemma \ref{convolution lemma} and Lemma \ref{lem:convcolour}, in order to prove Theorem \ref{st. log con for length 2}, it is enough to check the strict log-concavity of $p_k(n)$ for $k\in \{3,4,5\}$, unless $n=1$. For $n=1$, the claim follows directly for $k\neq3$ by computing $p_k(0)=1$, $p_k(1)=k$, and $p_k(2)=\frac{k(k+3)}{2}$. 
\end{proof}

We next require the following theorem, which was proven by three of the authors and Tripp \cite[Theorem 1.2]{MR4287510}. Throughout, the notation $f(x)=O_{\leq}(g(x))$ means that $|f(x)|\leq g(x)$ for a positive function $g$ and for all $x$ in the domain in which $f$ and $g$ are defined.
\begin{theorem}\label{thm of BKRT}
    Let $\alpha \in \mathbb{R}_{\geq 2}$ and $n,\ell \in \mathbb{N}_{\geq 2}$ with $n>\ell +1$. Set $N:=n-1-\frac{\alpha}{24}$, $L:=\ell-\frac{\alpha}{24}$, and suppose that $L \geq \max\{2\alpha^{11},\frac{100}{\alpha-24}\}$. Then we have
     \begin{multline*}
            p_{\alpha}(n-1)p_{\alpha}(\ell+1)-p_{\alpha}(n)p_{\alpha}(\ell)\\=\pi \left(\frac{\alpha}{24}\right)^{\frac{\alpha}{2}+1}(NL)^{-\frac{\alpha+5}{4}} \left(\sqrt{N}-\sqrt{L}\right)e^{\pi \sqrt{\frac{2\alpha}{3}}\left(\sqrt{N}+\sqrt{L}\right)}\left(1+O_{\leq}\left(\frac{14}{15}\right)\right).
    \end{multline*}
\end{theorem}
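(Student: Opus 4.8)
The final statement is an asymptotic formula, carrying an explicit multiplicative error of size at most $\frac{14}{15}$, for the difference of the two products $p_\alpha(n-1)p_\alpha(\ell+1)$ and $p_\alpha(n)p_\alpha(\ell)$. My plan is to first establish the asymptotics of the individual fractional partition numbers $p_\alpha(m)$ with a fully \emph{effective} error term, and then to extract the leading behaviour of the difference by a careful Taylor expansion in the unit shifts, tracking every error uniformly in $\alpha$.

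The starting point is the generating function identity $\sum_{m\geq0}p_\alpha(m)q^m=q^{\alpha/24}\eta(\tau)^{-\alpha}$, where $q=e^{2\pi i\tau}$ and $\eta$ is the Dedekind eta function. Since $\eta^{-\alpha}$ is, up to the power of $q$, a weakly holomorphic modular form of weight $-\alpha/2$, its dominant singularity as $q\to1$ is governed by the transformation $\eta(-1/\tau)=\sqrt{-i\tau}\,\eta(\tau)$. Applying Wright's variant of the circle method to the Cauchy integral $p_\alpha(m)=\frac{1}{2\pi i}\oint q^{-m-1}\prod_{j\geq1}(1-q^j)^{-\alpha}\,dq$, with the major arc centred at the saddle near $q=1$, yields for $M:=m-\frac{\alpha}{24}$ an expansion of the form
\begin{equation*}
p_\alpha(m)=c_\alpha\,M^{-\frac{\alpha+3}{4}}e^{\pi\sqrt{\frac{2\alpha}{3}}\sqrt{M}}\left(1+O_{\leq}(E_\alpha(M))\right),
\end{equation*}
where the index $\frac{\alpha}{2}+1$ of the Bessel function $I_{\frac{\alpha}{2}+1}$ in the Rademacher-type main term, together with its large-argument asymptotic $I_\nu(x)\sim e^x/\sqrt{2\pi x}$, produces the power $M^{-\frac{\alpha+3}{4}}$ and the constant $c_\alpha$, and $E_\alpha(M)$ decays like a negative power of $M$. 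Making the constant inside $E_\alpha(M)$ explicit, rather than leaving a bare $O$-symbol, is exactly what forces the hypothesis $L\geq\max\{2\alpha^{11},\frac{100}{\alpha-24}\}$: these inequalities quantify when the minor-arc contribution and the subleading terms of the Bessel asymptotic are provably dominated.

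With the individual asymptotics in hand, I would set $D:=p_\alpha(n-1)p_\alpha(\ell+1)-p_\alpha(n)p_\alpha(\ell)$ and observe that, by the choices $N=(n-1)-\frac{\alpha}{24}$ and $L=\ell-\frac{\alpha}{24}$, the four factors carry the natural variables $N$, $L+1$, $N+1$, and $L$, respectively. Factoring out the common quantity $c_\alpha^2(NL)^{-\frac{\alpha+3}{4}}e^{\pi\sqrt{2\alpha/3}(\sqrt{N}+\sqrt{L})}$ and expanding the unit shifts via $\sqrt{X+1}=\sqrt{X}+\frac{1}{2\sqrt{X}}+O(X^{-3/2})$ and $(X+1)^{-\frac{\alpha+3}{4}}=X^{-\frac{\alpha+3}{4}}(1+O(X^{-1}))$, the two products become nearly equal; their difference is governed to leading order by the first-order exponential term, giving
\begin{equation*}
\frac{c}{2}\left(\frac{1}{\sqrt{L}}-\frac{1}{\sqrt{N}}\right)=\frac{c}{2}\cdot\frac{\sqrt{N}-\sqrt{L}}{\sqrt{NL}},\qquad c:=\pi\sqrt{\tfrac{2\alpha}{3}},
\end{equation*}
while the prefactor mismatch contributes only at the lower relative order $\frac{N-L}{NL}$. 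Since $(NL)^{-\frac{\alpha+3}{4}}\cdot(NL)^{-1/2}=(NL)^{-\frac{\alpha+5}{4}}$ and one checks the constant identity $c_\alpha^2\cdot\frac{c}{2}=\pi\left(\frac{\alpha}{24}\right)^{\frac{\alpha}{2}+1}$ (which for $\alpha=1$ recovers $c_1=\frac{1}{4\sqrt{3}}$), this reproduces precisely the claimed main term $\pi\left(\frac{\alpha}{24}\right)^{\frac{\alpha}{2}+1}(NL)^{-\frac{\alpha+5}{4}}(\sqrt{N}-\sqrt{L})e^{\pi\sqrt{2\alpha/3}(\sqrt{N}+\sqrt{L})}$.

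The main obstacle is the error bookkeeping in the last step. Because $D$ is a difference of two nearly-equal, exponentially large quantities, there is severe cancellation: the leading terms cancel, and $D$ is smaller than either product by the factor $\frac{c}{2}\frac{\sqrt{N}-\sqrt{L}}{\sqrt{NL}}$, of relative size roughly $L^{-1/2}$. Consequently every relative error — from the individual asymptotics $E_\alpha(\cdot)$, from the Taylor remainders in $\sqrt{X+1}$ and $(X+1)^{-\frac{\alpha+3}{4}}$, and from the second-order exponential terms — must be controlled uniformly and then divided by this small main term before being summed. Showing that the totality of these contributions does not swamp the main term, and in fact amounts to at most $\frac{14}{15}$ given only $L\geq2\alpha^{11}$, is the delicate heart of the argument and is precisely where the explicit hypotheses on $L$ are consumed.
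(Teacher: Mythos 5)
You should first note a structural point: this paper does not prove the statement at all---it is quoted verbatim, with citation, as \cite[Theorem 1.2]{MR4287510} (Bringmann--Kane--Rolen--Tripp, \emph{Fractional partitions and conjectures of Chern--Fu--Tang and Heim--Neuhauser}), and the present paper only \emph{uses} it (in Lemma \ref{computer checks}). So the only fair comparison is with the proof in that reference, and there your outline does track the actual method: an effective, Wright-style circle-method asymptotic for $p_\alpha(m)$ with a single major arc at $q\to 1$ governed by the $\eta$-inversion, main term of Bessel type $I_{\frac{\alpha}{2}+1}$ yielding the shape $c_\alpha M^{-\frac{\alpha+3}{4}}e^{\pi\sqrt{2\alpha/3}\sqrt{M}}$, followed by a Taylor expansion in the unit shifts of the four factors to extract the factor $\frac{c}{2}\frac{\sqrt{N}-\sqrt{L}}{\sqrt{NL}}$ from the otherwise cancelling products. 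Your constant check $c_\alpha^2\cdot\frac{\pi}{2}\sqrt{\frac{2\alpha}{3}}=\pi\left(\frac{\alpha}{24}\right)^{\frac{\alpha}{2}+1}$ (consistent with $c_1=\frac{1}{4\sqrt{3}}$) is correct, and your identification of where the hypotheses on $L$ must enter is accurate.

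Nevertheless, as a standalone proof your proposal has a genuine gap: everything quantitative is asserted rather than carried out, and for this theorem the quantitative content \emph{is} the theorem. You never specify $E_\alpha(M)$, never bound the minor arcs (for real $\alpha$ this requires explicit estimates on $\left|\prod_{j\geq 1}(1-q^j)^{-\alpha}\right|$ away from $q=1$, not modularity, since $\eta^{-\alpha}$ is not a weakly holomorphic form in the strict sense for non-integral $\alpha$), and never perform the division of the accumulated relative errors by the small main-term factor to verify the bound $\frac{14}{15}$. That last step is delicate precisely because the errors must be uniform in $\alpha$: for instance, the prefactor-shift contribution you dismiss as ``lower relative order'' is, after dividing by the main term, of size roughly $\frac{\alpha+3}{2\pi}\sqrt{\frac{3}{2\alpha}}\left(\frac{1}{\sqrt{L}}+\frac{1}{\sqrt{N}}\right)$, which is small only because $L\geq 2\alpha^{11}$---exactly the kind of computation your sketch defers. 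Likewise the curious second hypothesis $L\geq\frac{100}{\alpha-24}$ (vacuous for $\alpha<24$, binding for $\alpha>24$) is never actually consumed anywhere in your argument. In short: the architecture matches the cited proof, but the explicit-constant analysis that turns the architecture into the stated $O_{\leq}\left(\frac{14}{15}\right)$ conclusion---the entire substance of \cite[Theorem 1.2]{MR4287510}---is missing.
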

We now use Theorem \ref{thm of BKRT} and a computer calculation to show Theorem \ref{st. log con for length 2}. We start with the following lemma.

\begin{lemma}\label{computer checks}
We have 
        \begin{equation*}
            p_k^2(n)>p_k(n-1)p_k(n+1)
        \end{equation*}
        if either
         $k=3$ and $2\le n \le 2\cdot 3^{11}+1$,
         $k=4$ and $1 \le n \le 2\cdot 10^5$, or
         $k=5$ and $1 \le n \le 8\cdot 10^5$.
\end{lemma}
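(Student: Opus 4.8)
The plan is to establish the three families of inequalities by an exact, finite computation, since each asserts strict log-concavity of $p_k$ only over an explicit bounded range of $n$. The first task is to compute the integers $p_k(n)$ exactly for $k\in\{3,4,5\}$ and all $n$ up to the stated bounds. I would do this directly from the generating function via the linear recurrence obtained from its logarithmic derivative: writing $P_k(q)=\sum_{n\ge0}p_k(n)q^n=\prod_{m\ge1}(1-q^m)^{-k}$ and comparing coefficients in $qP_k'(q)=P_k(q)\cdot k\sum_{m\ge1}\sigma(m)q^m$, where $\sigma(m)=\sum_{d\mid m}d$, yields
\begin{equation*}
n\,p_k(n)=k\sum_{j=1}^{n}\sigma(j)\,p_k(n-j),\qquad p_k(0)=1.
\end{equation*}
After a sieve for $\sigma(j)$, this produces all required values in $O(N^2)$ arithmetic operations on exact integers, where $N$ is the relevant upper bound. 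Alternatively, one may compute $p(n)=p_1(n)$ by Euler's pentagonal-number recurrence and then apply Lemma~\ref{lem:convcolour} $k$ times to obtain $p_k$ as an iterated convolution; both routes avoid any floating-point approximation.

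With the values in hand, for each $n$ in the prescribed range I would verify
\begin{equation*}
p_k(n)^2-p_k(n-1)p_k(n+1)>0
\end{equation*}
by forming the two products as exact integers and comparing them. Equivalently, log-concavity amounts to the ratios $p_k(n)/p_k(n-1)$ being strictly decreasing, but the exact integer cross-multiplication above is the cleanest certificate of a \emph{strict} inequality and is the comparison I would record. It is reassuring that the boundary cases match the statement of Theorem~\ref{st. log con for length 2}: for $k=3$ one has $p_3(1)^2=9=p_3(0)p_3(2)$, so the inequality first becomes strict at $n=2$, which is exactly where the range begins, whereas for $k\in\{4,5\}$ the inequality already holds strictly at $n=1$.

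The main obstacle here is not mathematical but computational: the integers $p_k(n)$ grow like $\exp(\pi\sqrt{2kn/3})$, so for the largest case ($k=5$, $n$ up to $8\cdot10^5$) they have several thousand decimal digits, and storing all values up to $N$ together with performing $O(N^2)$ big-integer operations demands careful attention to time and memory. Exact arithmetic is essential precisely because strictness must be certified: a floating-point evaluation could not be trusted to resolve the sign of $p_k(n)^2-p_k(n-1)p_k(n+1)$ when the two sides are close. I would therefore carry out the computation in arbitrary-precision integer arithmetic, sieving $\sigma$ once, advancing the recurrence while storing all values up to $N$, and then scanning the full range; completing this finite verification establishes the lemma. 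Finally, the upper bound $2\cdot3^{11}+1$ for $k=3$ is chosen so that the complementary range $n\ge 2\cdot3^{11}+2$ falls under the hypothesis $L\ge 2\alpha^{11}$ of Theorem~\ref{thm of BKRT}, where positivity is then guaranteed analytically.
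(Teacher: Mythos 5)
Your proposal is correct and matches the paper's approach: the paper likewise disposes of these finite ranges by a direct computer verification (with the ranges dictated by where Theorem~\ref{thm of BKRT} takes over), and your recurrence $n\,p_k(n)=k\sum_{j=1}^{n}\sigma(j)p_k(n-j)$ is exactly the one underlying the paper's auxiliary sequences $p_{k,\bm d}^{\pm}$ in the case $d_j=j$. Your additional remarks on exact integer arithmetic and the boundary equality $p_3(1)^2=p_3(0)p_3(2)$ are consistent with, and slightly more explicit than, what the paper records.
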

\begin{proof}
	The right-hand side of Theorem \ref{thm of BKRT} is always positive thanks to the assumption that $n>\ell+1$. Taking $\alpha=k \in \{3,4,5\}$, $n\mapsto n+1$, and $\ell=n-1$ gives, for $n\geq 2k^{11}+\frac{k}{24}+1$,
	\begin{equation}\label{eqn:bign}
	p_k^2(n)>p_k(n-1)p_k(n+1).
	\end{equation}
	So we are left to show that $p_k(n)$ is strictly log-concave for $k\in\{3,4,5\}$ and for $n\leq 2k^{11}+1$ (as $0<\frac{k}{24}<1$). The upper bound of $n$ for $k\in \{3,4,5\}$ were done. Using a computer (Apple MacBook Air M3, 8 GB RAM, and 256GB memory throughout).
\end{proof}
 \begin{remark}
 	By Lemma~\ref{reduce} and Lemma~\ref{computer checks}, we are left to verify Theorem \ref{thm:general} for the following cases:
 	\begin{enumerate}
 		\item $2\cdot 10^5+1 \le n \le 2\cdot 4^{11}+1$ for $k=4$,
 		
 		\item $8\cdot 10^5+1 \le n \le 2\cdot 5^{11}+1$ for $k=5$.
 	\end{enumerate}
 \end{remark}

 The remaining cases in the above remark seem to be too heavy to verify numerically. For the case of general log-concavity, the numerical difficulty was overcome in \cite[Section 4]{MR4287510} by using certain sequences to establish upper and lower bounds for $p_k(n)$ to tackle the remaining cases. We next recall these sequences. Let ${\bm d}=(d_j)_{j\ge1}$ be a sequence of positive integers with $d_j\leq j$, and for $n\in \mathbb{N}$ we recursively define $p_{k,{\bm d}}^{\pm}(0):=1$ and, for $n\in\N$,
    \begin{align*}
            p_{k,{\bm d}}^{-}(n)&:= \frac{k}{n}\sum_{\ell=1}^{d_n}\sigma(\ell)p_{k,{\bm d}}^-(n-\ell),\\
            p_{k,{\bm d}}^{+}(n)&:= \frac{k}{n}\sum_{\ell=1}^{d_n}\sigma(\ell)p_{k,{\bm d}}^+(n-\ell)+knp_{k,{\bm d}}^+(n-d_n-1),
    \end{align*}
    where $\sigma(\ell) := \sum_{d\mid\ell}d$ denotes the sum of divisors of $\ell$. We also set
    \begin{equation*}
        p_{k,{\bm d}}^{\pm}(n):=0 \text{ for } n\leq -1.
    \end{equation*}
Using these sequences, upper and lower bounds for $p_k(n)$ were established in \cite[Lemma 4.1]{MR4287510}.
\begin{lemma}\label{2}
    For $n\in \mathbb{N}$ we have
    \begin{equation*}
        p_{k,{\bm d}}^-(n)\leq p_k(n)\leq p_{k,{\bm d}}^+(n).
    \end{equation*}
\end{lemma}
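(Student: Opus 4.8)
The plan is to prove both inequalities simultaneously by strong induction on $n$, driven by the exact recurrence that $p_k$ itself satisfies. First I would derive this recurrence from the generating function. Writing $P_k(q):=\sum_{n\ge0}p_k(n)q^n=\prod_{m\ge1}(1-q^m)^{-k}$ and taking the logarithmic derivative gives
\begin{equation*}
q\frac{P_k'(q)}{P_k(q)}=k\sum_{m\ge1}\frac{mq^m}{1-q^m}=k\sum_{m\ge1}m\sum_{j\ge1}q^{mj}=k\sum_{\ell\ge1}\sigma(\ell)q^\ell,
\end{equation*}
so that $qP_k'(q)=P_k(q)\cdot k\sum_{\ell\ge1}\sigma(\ell)q^\ell$. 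Comparing the coefficient of $q^n$ yields the familiar recurrence
\begin{equation*}
p_k(n)=\frac{k}{n}\sum_{\ell=1}^n\sigma(\ell)p_k(n-\ell),\qquad n\ge1.
\end{equation*}
This is the engine for both bounds: observe that $p_{k,{\bm d}}^-$ is obtained from it by truncating the sum at $d_n$, while $p_{k,{\bm d}}^+$ truncates and then adds a correction term. The base case $n=0$ reads $1\le1\le1$.

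For the lower bound the inductive step is immediate. Assuming $p_{k,{\bm d}}^-(m)\le p_k(m)$ for all $m<n$, I would estimate
\begin{equation*}
p_{k,{\bm d}}^-(n)=\frac{k}{n}\sum_{\ell=1}^{d_n}\sigma(\ell)p_{k,{\bm d}}^-(n-\ell)\le\frac{k}{n}\sum_{\ell=1}^{d_n}\sigma(\ell)p_k(n-\ell)\le\frac{k}{n}\sum_{\ell=1}^{n}\sigma(\ell)p_k(n-\ell)=p_k(n),
\end{equation*}
where the first inequality uses the induction hypothesis and the second simply restores the omitted, nonnegative tail terms $d_n<\ell\le n$.

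The upper bound requires quantifying precisely the tail that $p_{k,{\bm d}}^-$ discards, and this is the one step with any content. Two ingredients are needed: the monotonicity $p_k(n-\ell)\le p_k(n-d_n-1)$ for $\ell\ge d_n+1$ (clear, since adjoining a part of size one in any colour shows $p_k$ is nondecreasing), and the elementary summatory estimate
\begin{equation*}
\sum_{\ell=1}^n\sigma(\ell)=\sum_{d=1}^n d\left\lfloor\frac{n}{d}\right\rfloor\le\sum_{d=1}^n d\cdot\frac{n}{d}=n^2.
\end{equation*}
Assuming $p_k(m)\le p_{k,{\bm d}}^+(m)$ for all $m<n$, I would split the recurrence at $d_n$ and bound the tail by
\begin{equation*}
\frac{k}{n}\sum_{\ell=d_n+1}^{n}\sigma(\ell)p_k(n-\ell)\le\frac{k}{n}\,p_k(n-d_n-1)\sum_{\ell=1}^{n}\sigma(\ell)\le\frac{k}{n}\cdot n^2\cdot p_k(n-d_n-1)=kn\,p_k(n-d_n-1),
\end{equation*}
so that applying the induction hypothesis to each surviving term gives $p_k(n)\le p_{k,{\bm d}}^+(n)$. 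The constant $n^2$ is exactly what makes the correction term $kn\,p_{k,{\bm d}}^+(n-d_n-1)$ absorb the tail, so the main obstacle is simply recognizing this clean bound for the summatory function of $\sigma$; everything else is bookkeeping for the double induction.
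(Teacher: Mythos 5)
Your proof is correct: the logarithmic-derivative recurrence $p_k(n)=\frac{k}{n}\sum_{\ell=1}^n\sigma(\ell)p_k(n-\ell)$, the monotonicity of $p_k$, and the bound $\sum_{\ell=1}^n\sigma(\ell)\le n^2$ together make the double induction go through exactly as you describe (including the degenerate case $d_n=n$, where the tail and the correction term both vanish). Note that the paper itself does not prove this lemma but imports it from \cite[Lemma 4.1]{MR4287510}, where the argument is essentially the one you give, so your proposal matches the source's approach rather than offering a new route.
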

    It turns out that the first inequality in Lemma \ref{2} is often strict. The following lemma is not hard to show.
    \begin{lemma}\label{3}
       For a sequence $\textbf{d}=(d_j)_{j\ge1}$ with $d_j<j$, we have $p_{k,{\bm d}}^-(n)< p_k(n)$.
    \end{lemma}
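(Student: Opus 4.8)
The plan is to compare $p_{k,{\bm d}}^{-}$ against the \emph{exact} recurrence satisfied by $p_k$ itself, of which the definition of $p_{k,{\bm d}}^{-}$ is nothing but a truncation. Taking the logarithmic derivative of $\sum_{n\ge0}p_k(n)q^n=\prod_{n\ge1}(1-q^n)^{-k}$ and using $\sum_{n\mid N}n=\sigma(N)$, one obtains the classical recurrence
\begin{equation*}
    p_k(n)=\frac{k}{n}\sum_{\ell=1}^{n}\sigma(\ell)\,p_k(n-\ell),\qquad n\ge1,
\end{equation*}
which is exactly the full (untruncated) analogue of the defining relation for $p_{k,{\bm d}}^{-}$: the latter merely replaces the upper limit $n$ by $d_n$ and replaces $p_k$ by $p_{k,{\bm d}}^{-}$ on the right-hand side. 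This makes the recurrence the natural object to subtract from.

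I would then subtract the two recurrences. Since $d_n<n$, the index range $1\le\ell\le n$ splits into the retained block $1\le\ell\le d_n$ and the nonempty tail $d_n+1\le\ell\le n$, yielding
\begin{equation*}
    p_k(n)-p_{k,{\bm d}}^{-}(n)=\frac{k}{n}\left(\sum_{\ell=1}^{d_n}\sigma(\ell)\bigl(p_k(n-\ell)-p_{k,{\bm d}}^{-}(n-\ell)\bigr)+\sum_{\ell=d_n+1}^{n}\sigma(\ell)\,p_k(n-\ell)\right).
\end{equation*}
The first sum is nonnegative: here $1\le n-\ell\le n-1$, so each factor $p_k(n-\ell)-p_{k,{\bm d}}^{-}(n-\ell)\ge0$ by Lemma \ref{2} and $\sigma(\ell)>0$. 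The second sum is strictly positive: it is nonempty precisely because $d_n<n$ forces $d_n+1\le n$, and every summand satisfies $\sigma(\ell)\ge1$ and $p_k(n-\ell)\ge p_k(0)=1>0$; in particular the end term $\ell=n$ already contributes $\sigma(n)>0$. Hence the bracket is strictly positive and $p_{k,{\bm d}}^{-}(n)<p_k(n)$.

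Notice that this argument uses only the weak bound of Lemma \ref{2} on the retained block, so no separate induction is needed — the strictness is produced entirely by the truncated tail. The one boundary point to flag is $n=1$, where $d_1<1$ is impossible for a positive integer (indeed $p_{k,{\bm d}}^{-}(1)=k=p_k(1)$); one therefore reads the hypothesis as $d_j<j$ for $j\ge2$, or equivalently establishes the strict inequality for every $n$ with $d_n<n$, for which the tail is guaranteed nonempty. I do not expect a genuine obstacle: the only care required is recording the $p_k$-recurrence correctly and checking that the arguments $n-\ell$ appearing under Lemma \ref{2} lie in $[1,n-1]$, so that no value of $p_{k,{\bm d}}^{-}$ at a negative index (where it equals $0$ and the bound $p_{k,{\bm d}}^{-}\le p_k$ is not what is being invoked) enters the comparison.
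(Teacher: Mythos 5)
Your proof is correct and is surely the intended argument: the paper omits the proof (calling the lemma ``not hard to show''), and comparing the truncated recursion against the exact recurrence $np_k(n)=k\sum_{\ell=1}^{n}\sigma(\ell)p_k(n-\ell)$, using Lemma \ref{2} on the retained block and the nonempty tail $d_n+1\le\ell\le n$ for strictness, is exactly the natural route. Your closing caveat is also on point: since $\bm{d_4}$ and $\bm{d_5}$ have $d_j=j$ for small $j$, the paper actually invokes the lemma in the pointwise form you establish (strict inequality at each individual $n$ with $d_n<n$), so reading the hypothesis that way is not just permissible but necessary for the later applications.
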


Following \cite[Proposition 4.3]{MR4287510}, for $k=4$, we define $\bm{d}=\bm{d}_{\bm4}$ by  
\begin{equation*}
    d_j = d_{4,j} := 
\begin{cases}
j & \text{if } j \leq 2 \cdot 10^5, \\[8pt]
\left\lfloor 250 \, j^{\frac{1}{3}} \right\rfloor & \text{if } 2 \cdot 10^5 < j \leq 3.5 \cdot 10^6, \\[8pt]
\left\lfloor 1125 \, j^{\frac{1}{3}} \right\rfloor & \text{if } j > 3.5 \cdot 10^6
\end{cases}
\end{equation*}
Moreover, for $k=5$, we define $\bm{d}=\bm{d}_{\bm5}$ by  
\begin{equation}\label{eqn:d5def}
d_j = d_{5,j} := 
\begin{cases}
j & \text{if } j \leq 8 \cdot 10^5, \\[8pt]
\left\lfloor 25 \, \sqrt{j} \right\rfloor & \text{if } 8 \cdot 10^5 < j \leq 2 \cdot 10^7, \\[8pt]
\left\lfloor \frac{43}{2} \, \sqrt{j} \right\rfloor & \text{if } j > 2 \cdot 10^7.
\end{cases}
\end{equation}
To prove Theorem \ref{st. log con for length 2}, we need the following lemma that is not hard to show.
\begin{lemma}\label{lem:dj<j}
\noindent

\noindent
\begin{enumerate}
\item If $j>2\cdot 10^5$, then we have $d_{4,j}<j$.
\item If $j>8\cdot 10^5$, then we have $d_{5,j}<j$.
\end{enumerate}
\end{lemma}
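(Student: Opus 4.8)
The plan is to prove each of the two assertions by unwinding the piecewise definitions and, on each branch, discarding the floor function via the trivial bound $\lfloor x\rfloor\le x$. This reduces every case to a power inequality of the shape $c\,j^{1/m}<j$ (with $m=3$ for $k=4$ and $m=2$ for $k=5$), which is equivalent to $c^{m/(m-1)}<j$. Since $d_{k,j}$ is an integer and $c\,j^{1/m}<j$ forces $\lfloor c\,j^{1/m}\rfloor\le c\,j^{1/m}<j$, establishing the real-variable inequality immediately yields the desired strict integer inequality $d_{k,j}<j$. Thus the entire argument comes down to checking, for each branch, that the stated lower bound on $j$ exceeds the critical threshold $c^{m/(m-1)}$.

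For part (1), with $j>2\cdot 10^5$ there are two branches. On $2\cdot 10^5<j\le 3.5\cdot 10^6$ we have $d_{4,j}=\lfloor 250\,j^{1/3}\rfloor$, and $250\,j^{1/3}<j$ holds exactly when $j>250^{3/2}$; since $250^{3/2}=\sqrt{250^3}\approx 3952.85<2\cdot 10^5$, the inequality holds throughout this branch. On $j>3.5\cdot 10^6$ we have $d_{4,j}=\lfloor 1125\,j^{1/3}\rfloor$, and $1125\,j^{1/3}<j$ holds when $j>1125^{3/2}\approx 37733.6$, which is satisfied since $j>3.5\cdot 10^6$.

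For part (2), with $j>8\cdot 10^5$ there are again two branches. On $8\cdot 10^5<j\le 2\cdot 10^7$ we have $d_{5,j}=\lfloor 25\sqrt{j}\rfloor$, and $25\sqrt{j}<j$ holds when $j>25^2=625$, which is clear. On $j>2\cdot 10^7$ we have $d_{5,j}=\lfloor \tfrac{43}{2}\sqrt{j}\rfloor$, and $\tfrac{43}{2}\sqrt{j}<j$ holds when $j>(\tfrac{43}{2})^2=\tfrac{1849}{4}=462.25$, again immediate. In every branch the prescribed lower bound on $j$ dwarfs the relevant threshold, so $d_{k,j}<j$ follows. The proof is entirely elementary; the only point requiring any care is the bookkeeping at the branch boundaries and confirming that each threshold $c^{m/(m-1)}$ is comfortably below the corresponding cutoff, so there is no genuine analytic obstacle here.
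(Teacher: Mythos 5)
Your proof is correct: in each branch the floor bound $\lfloor c\,j^{1/m}\rfloor\le c\,j^{1/m}$ reduces the claim to $j>c^{m/(m-1)}$, and the stated cutoffs $2\cdot 10^5$, $3.5\cdot 10^6$, $8\cdot 10^5$, $2\cdot 10^7$ all exceed the respective thresholds $250^{3/2}\approx 3953$, $1125^{3/2}\approx 37734$, $625$, and $462.25$. The paper leaves this lemma as ``not hard to show,'' and your argument is exactly the routine verification intended.
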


We are now ready to prove Theorem \ref{st. log con for length 2}. 
\begin{proof}[Proof of Theorem \ref{st. log con for length 2}]
We are left to prove Theorem \ref{st. log con for length 2} for the cases mentioned in the remark below Lemma \ref{computer checks}.

We first consider the case $k=4$. Note that $2\cdot 4^{11}+1<8.4\cdot 10^{6}$.
In the proof of \cite[Proposition 4.3]{MR4287510} it was shown that, for all $n \leq 8.4 \cdot 10^6$, we have
\begin{equation}\label{Verweis}
    \frac{p_{4,{\bm{d_4}}}^-(n)}{p_{4,{\bm {d_4}}}^+(n-1)}\geq\frac{p_{4,{\bm{d_4}}}^+(n+1)}{p_{4,{\bm {d_4}}}^-(n)}.
\end{equation}
Combining Lemma \ref{3} with Lemma \ref{lem:dj<j} (1), for $n>2\cdot 10^5$, we have $p_{4,\bm{d_4}}^{-}(n)<p_4(n)$. Using \eqref{Verweis} and Lemma \ref{2}, we obtain, for $2\cdot10^5<n\leq8.4\cdot 10^6$,
\begin{equation*}
    p_4^2(n)>p_{4,{\bm{d_4}}}^-(n)^2\geq p_{4,{\bm{d_4}}}^+(n-1)p_{4,{\bm{d_4}}}^+(n+1)\geq p_4(n-1)p_4(n+1).
\end{equation*}

We next consider the case $k=5$. Note that $2\cdot 5^{11}+1< 9.9 \cdot 10^7$. We use the sequence in \eqref{eqn:d5def}. In the proof of \cite[Proposition 4.3]{MR4287510} it was shown that, for $n \leq 9.9 \cdot 10^7$,
\begin{equation*}
    \frac{p_{5,{\bm{d_5}}}^-(n)}{p_{5,{\bm {d_5}}}^+(n-1)}\geq\frac{p_{5,{\bm{d_5}}}^+(n+1)}{p_{5,{\bm {d_5}}}^-(n)}.
\end{equation*}
Combining Lemma \ref{3} with Lemma \ref{lem:dj<j} (2), for $n>8\cdot 10^5$, we have $p_{5,\bm{d}}^{-}(n)<p_5(n)$.  Applying Lemma \ref{2}, we obtain, for $8\cdot 10^5<n\le 9.9 \cdot 10^7$, 
\begin{equation*}
    p_5^2(n)>p_5(n-1)p_5(n+1). 
\end{equation*}
Combining gives the claim.
\end{proof}
 
 We are now ready to prove Corollary \ref{st. log con for ln 2, equv}.

\begin{proof}[Proof of Corollary \ref{st. log con for ln 2, equv}]\hspace{0cm}
	
\noindent(1) By Theorem \ref{st. log con for length 2}, we have, for $n\in \mathbb{N}$ and $k\ge 4$, 
\begin{equation*}
    p_k^2(n)>p_k(n-1)p_k(n+1).
\end{equation*}
In particular, this holds for $1\leq n\leq N-1$ (for any $N\in \mathbb{N}_{\ge 2}$). We now use Lemma \ref{eqv reln for st log conc.} with $r=1$, $a_n=p_k(n)$, $n_1=\ell$, and $n_2=n$. Then we have, for $1\leq \ell+1<n\le N$,
\begin{equation*}
    p_k(n-1)p_k(\ell+1)>p_k(n)p_k(\ell).
\end{equation*}
This yields (1). 

\noindent(2) For $n\ge 2$, we have, by Theorem \ref{st. log con for length 2}, 
\begin{equation*}
	p_3^2(n)>p_3(n-1)p_3(n+1).
\end{equation*}
This in particular holds for any $2\le n\le N-1$.
Again we use Lemma \ref{eqv reln for st log conc.} with $a_n=p_3(n)$, $r=2$, $n_1=\ell$, and $n_2=n$. Then, for all $2\leq \ell+1<n\le N$,
\begin{equation*}
    p_3(n-1)p_3(\ell+1)>p_3(n)p_3(\ell).
\end{equation*}
This completes the proof of (2).

\noindent(3) For $k=2$, \eqref{eqn:bign} implies the claim for $n>2^{12}+1$. We used a computer to verify the cases $n\leq 2^{12}+1$. This completes the proof of (3).
\end{proof}
\section{Proof of Theorem \ref{thm:general}}\label{sec:Inequality} 

To deal with the proof of Theorem \ref{thm:general}, we next recall an important notion that appears frequently in number theory and combinatorics. (see \cite[Subsection 1.2]{arnold2012majorization}, \cite[Definition 5.1]{doi:10.1142/S179383090900035X}). 
\begin{definition}\label{def: robin hood}
    For ${\bm n}=(n_1,n_2,\cdots,n_r)\in \mathbb{N}^r$, we define a \textit{Robin Hood transformation}\footnote{In the literature, the name ``Pigou--Dalton'' transfer is also used.} as a map $T\colon\mathbb{N}^r\to \mathbb{N}^r$, such that for some $1\leq j,\ell\leq r$ with $n_j>n_\ell$, the vector $T({\bm n}):={\bm N}=(N_1,N_2,\cdots,N_r)$ satisfies $N_j=n_j-1,N_\ell=n_\ell+1$, and $N_k=n_k$ for $k\not\in \{j,\ell\}$.
\end{definition}

\begin{remark}\label{rmk: robin hood induces majorization}
     By Definition \ref{def: robin hood}, ${\bm n}$ majorizes $T({\bm n})$.
\end{remark}

The following lemma is well-known in combinatorics. 
\begin{lemma}\label{3.2.16}
    Let ${\bm a}=(a_1,a_2,\cdots,a_r)$ and ${\bm b}=(b_1,b_2,\cdots,b_r)$ be partitions of $n\in\N_0$. Then  ${\bm b}\succ{\bm a}$ if and only if ${\bm a}$ is obtained from ${\bm b}$ by a finite number of Robin Hood transformations.
\end{lemma}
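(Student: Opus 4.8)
The plan is to prove Lemma \ref{3.2.16} by establishing the two directions separately, with the nontrivial content residing in the ``only if'' direction.

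For the easier ``if'' direction, suppose $\bm b$ is obtained from $\bm a$ by applying a single Robin Hood transformation $T$, so that $N_\ell = a_\ell + 1$ and $N_j = a_j - 1$ for some indices with $a_j > a_\ell$, all other coordinates fixed. I would compute the partial sums $\sum_{i=1}^\nu N_i$ and compare them with $\sum_{i=1}^\nu a_i$. The transformation adds $1$ to the $\ell$-th coordinate and subtracts $1$ from the $j$-th; hence the partial sums are unchanged except possibly on the range of indices lying between $\min\{\ell,j\}$ and $\max\{\ell,j\}-1$, where they differ by exactly $\pm 1$ in the direction that makes $\bm b$'s partial sums at least as large as $\bm a$'s. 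A small care is needed because the definition of majorization assumes both $\bm a$ and $\bm b$ are \emph{partitions}, i.e. weakly increasing; one must check that the result of a Robin Hood transformation can be re-sorted into a partition without destroying the majorization relation, or restrict attention to transformations that move mass from a larger part to a smaller part in a way compatible with the ordering. Since majorization is transitive, iterating over a finite sequence of such transformations yields $\bm b \succeq \bm a$, and the fact that each transformation strictly changes the vector gives $\bm b \succ \bm a$ when at least one transformation is applied.

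For the harder ``only if'' direction, assume $\bm b \succ \bm a$ and construct an explicit finite sequence of Robin Hood transformations carrying $\bm a$ to $\bm b$. The standard approach is induction on a discrepancy measure such as $D(\bm a, \bm b) := \tfrac12\sum_{i=1}^r |a_i - b_i|$, or equivalently on $\sum_i \max\{b_i - a_i, 0\}$. Assuming $\bm a \neq \bm b$, I would locate the smallest index $\ell$ with $a_\ell < b_\ell$ and the smallest index $j > \ell$ with $a_j > b_j$; the majorization inequalities guarantee that such indices exist (if $a_\ell < b_\ell$ then to keep the total sums equal some later coordinate must have $a_j > b_j$, and the partial-sum inequalities force the deficit to appear after $\ell$). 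Applying a single Robin Hood transformation that increments $a_\ell$ and decrements $a_j$ produces a new partition $\bm a'$ which still satisfies $\bm a' \preceq \bm b$ but with $D(\bm a', \bm b) = D(\bm a, \bm b) - 1$; one then invokes the inductive hypothesis.

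The main obstacle I anticipate is bookkeeping the interaction between the majorization partial-sum conditions and the requirement that all intermediate vectors remain genuine partitions, i.e. weakly increasing tuples in $\mathbb N^r$. Concretely, after choosing the pair $(\ell, j)$ one must verify both that $a_j > a_\ell$ (so that $T$ is a legal Robin Hood transformation) and that the updated vector $\bm a'$ is still sorted; the choice of $\ell$ as the smallest coordinate falling short of $\bm b$ and $j$ as the next coordinate exceeding $\bm b$ is designed precisely to make these checks go through, but confirming that the new partial sums never overshoot those of $\bm b$ requires a careful case analysis on indices $\nu$ lying below $\ell$, between $\ell$ and $j$, and at or above $j$. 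Once this invariant is shown to be preserved, the strict decrease of the discrepancy measure guarantees termination in finitely many steps, completing the proof.
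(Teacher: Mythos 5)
Your plan is mathematically sound, but you should know that the paper does not actually prove Lemma \ref{3.2.16}: its ``proof'' is a one-line attribution to Muirhead and to Hardy--Littlewood--P\'olya (\emph{Inequalities}, Subsection 2.19, Lemma 2), with a further pointer to Arnold's book. What you have written out is essentially that classical argument, so you are supplying the self-contained proof that the paper outsources. On the detail you worry about most: the concern that intermediate tuples stay sorted can be dispensed with entirely, because Definition \ref{def: majorization} lets a Robin Hood transformation act on arbitrary tuples of $\N^r$ with no order constraint on the pair $(\ell,j)$. Running your induction coordinatewise, with $\ell$ the least index where $a_\ell<b_\ell$ and $j$ the least index greater than $\ell$ with $a_j>b_j$, the partial-sum hypotheses force $a_i=b_i$ for $i<\ell$; legality follows from $a_j>b_j\geq b_\ell>a_\ell$, which uses only that $\bm b$ is sorted; and the invariant $\sum_{i\leq\nu}a_i'\leq\sum_{i\leq\nu}b_i$ survives for every $\nu$ even when $\bm a'$ is unsorted. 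Since your discrepancy $D$ strictly decreases, the process terminates at $\bm b$ exactly, with no re-sorting needed. This is the only direction the paper uses (in the proof of Theorem \ref{thm:general}).

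The one place your sketch would fail as written is the strictness claim in the ``if'' direction: you assert that because each transformation strictly changes the vector, at least one transformation yields $\bm b\succ\bm a$. But a transformation with $a_j=a_\ell+1$ merely swaps two entries, so as \emph{partitions} (multisets) the input and output coincide, and $\bm b\succ\bm a$ fails; worse, on unsorted intermediates a pair of transformations can cycle back to the starting tuple. This is really an imprecision in the statement of the lemma itself (the honest conclusion of the ``if'' direction is $\bm b\succeq\bm a$, with strictness only if some step has $a_j>a_\ell+1$), and it is exactly the dichotomy the paper handles as Cases 1 and 2 in the proof of Theorem \ref{thm:general}. If you write your proof in full, either weaken the ``if'' direction to $\succeq$ or restrict to transformations with $a_j>a_\ell+1$; otherwise your argument is correct.
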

\begin{proof}
Using Muirhead's work \cite{doi:10.1017/S001309150003460X}, Hardy--Littlewood--P\'olya \cite[Subsection 2.19, Lemma 2]{hardy1952inequalities} proved the forward direction; also see \cite[Subsection 1.2]{arnold2012majorization}.

We next prove the reverse direction. From Definition \ref{Def: majorisation}, it is clear that majorization is a partial ordering, in particular, a transitive ordering in the set of all partitions of a positive integer. Thus, from the observation in Remark \ref{rmk: robin hood induces majorization} and the assumption that ${\bm a}$ is obtained from ${\bm b}$ by a finite number of transformations, we have ${\bm b} \succ {\bm a}$. Hence, our proof is complete.
\end{proof}
We are now ready to prove Theorem \ref{thm:general}.
\begin{proof}[Proof of Theorem \ref{thm:general}]
 We study how $p_k$ behaves for all possible images of a partition ${\bm b}=(b_1,b_2,\cdots,b_r)$ of $n$ under a Robin Hood transformation $T$.\\
\textbf{Case 1 : $b_j=b_\ell+1$.} In this case
\begin{equation*}
        \begin{split}
            T({\bm b})&=(b_1,b_2,\cdots,b_{j-1},b_j-1,b_{j+1},\cdots, b_{\ell-1},b_\ell+1,b_{\ell+1},\cdots,b_r)\\&=(b_1,b_2,\cdots,b_{j-1},b_\ell,b_{j+1},\cdots,b_{\ell-1},b_j,b_{\ell+1},\cdots,b_r).
        \end{split}
    \end{equation*}
Thus, $T({\bm b})$ is obtained from ${\bm b}$ by interchanging two components. Therefore 
 \begin{equation*}
        p_k({\bm b})=p_k(T({\bm b})).
    \end{equation*}
\textbf{Case 2 : $b_j>b_\ell+1$.} In this case
    \begin{equation*}
        \begin{split}
            T({\bm b})&=(b_1,\cdots,b_{j-1},b_j-1,b_{j+1},\cdots,b_{\ell-1},b_\ell+1,b_{\ell+1},\cdots,b_r).
        \end{split}
    \end{equation*}
Note that, $b_{\ell}\ge 1$, so $b_j>b_{\ell}+1\ge 2$. Thus, by Corollary \ref{st. log con for ln 2, equv} (1), (2), we have,
    \begin{align*}
            &p_k(T({\bm b}))\\
            &\!\!=p_k(b_1)\cdots p_k(b_{j-1})p_k(b_j-1)p_k(b_{j+1})\cdots p_k(b_{\ell-1})p_k(b_\ell+1)p_k(b_{\ell+1}) \cdots p_k(b_r)\\
            &\!\!>p_k(b_1)\cdots p_k(b_{j-1})p_k(b_j)p_k(b_{j+1})\cdots p_k(b_{\ell-1})p_k(b_\ell)p_k(b_{\ell+1}) \cdots p_k(b_r)=p_k({\bm b}).
    \end{align*}
    To prove Theorem \ref{eqn: gen. ineq.}, by \eqref{pv}, we can assume that $a_j\ne b_j$ for $j\in \{1,2,\cdots, r\}$. Therefore, without loss of generality, $a_1<b_1$. 
    
    \noindent Since ${\bm b}\succ{\bm a}$, by Lemma \ref{3.2.16}, there exist Robin Hood transformations $T_1, T_2$, $\cdots , T_s$ such that 
    \begin{equation*}
        T_s\circ T_{s-1}\cdots \circ T_1(\bm{b})=\bm{a}.
    \end{equation*}
    For $1\le m \le s$, we let
    \begin{align*}
        \bm{b}^{[m]}=\left(b^{[m]}_1,b^{[m]}_2,\cdots,b^{[m]}_r\right)&:= T_{m}\circ T_{m-1}\cdots \circ T_2\circ T_1(\bm{b}),
       \\ \left(b^{[0]}_1, b^{[0]}_2, \cdots, b^{[0]}_r\right)&:= (b_1, b_2,\cdots , b_r). 
    \end{align*}
      Since $\bm{a}\ne \bm{b}$, there exists at least one Robin Hood transformation $T_m$ with $m\in \{1,2,\cdots,s\}$, such that, for some $1\le \ell, j \le r$, we have $b^{[m-1]}_j>b^{[m-1]}_{\ell}+1$ (see Definition \ref{def: robin hood}). Indeed if $b^{[m-1]}_j=b^{[m-1]}_{\ell}+1$ for every $1\le m \le s$, then we would have
     \begin{equation*}
         \bm{b}=\bm{b}^{[1]}=\bm{b}^{[2]}\cdots=\bm{b}^{[s-1]}=\bm{a}.
     \end{equation*}
     This would be a contradiction. Therefore, by Case 2, we have
     \begin{equation*}
         \begin{split}
             p_k(\bm{b})\le p_k\left(\bm{b}^{[1]}\right)\le \cdots \le p_k\left(\bm{b}^{[m-1]}\right) < p_k\left(\bm{b}^{[m]}\right)\le \cdots \le p_k(\bm{a}).
         \end{split}
     \end{equation*}
     This finishes the proof.
\end{proof}

\section{Questions for future research}\label{sec:furtherquestions}

Since majorization only induces a partial ordering $\prec$, it is natural to consider inequalities between $p_k({\bm a})$ and $p_k({\bm b})$, where neither $\bm{a}$ nor $\bm{b}$ majorizes the other. For $n,r,R\in\N$ with $R\leq r-1$ and sequences $\bm{c}=(c_1,\dots,c_r)$ and $\bm{d}=(d_1,\dots,d_r)$ with exactly $r$ parts that sum to $n$ (i.e., $\sum_{j=1}^{r} c_r=\sum_{j=1}^r d_r=n$), consider the ``partial (strict) majorization'' 
    \begin{equation}\label{eqn:partialmajor}
           \hspace{-.3cm} \sum_{j=1}^{\ell} c_j\leq \sum_{j=1}^{\ell} d_j \text{ for } 1 \leq \ell \leq R \text{ and } \sum_{j=1}^{\ell} c_j<\sum_{j=1}^{\ell}d_j\text{ for some }1\leq \ell\leq R.
    \end{equation}
Noting that $p_k$ reverses the order from $\succ$ in Theorem \ref{thm:general}, computer calculations were run to check whether weaker conditions led to reversed ordering as well. These indicated that the ordering is sensitive to the smallest parts of the partition. Hence, for a partition $\bm{a}=(a_1,\dots,a_r)$ with $a_1\geq a_1\geq\dots \geq a_r$, we set\footnote{Although the non-increasing ordering for partitions used in this paper is more common, some authors define partitions with this reversed ordering on the parts.} ${\bm{a}^*}:=(a_r,a_{r-1},\dots,a_{1})$. Set
\begin{multline*}
\mathcal{S}_{n,r,R}^{*}:=\Big\{(\bm{a},\bm{b}): \bm{a}=(a_1,\dots,a_r), \bm{b}=(b_1,\dots, b_r)\text{ are partitions of $n$}\\[-.35em]
\text{with $\bm{c}={\bm{b}^*}$ and $\bm{d}={\bm{a}^*}$ satisfying \eqref{eqn:partialmajor}}\Big\}.
\end{multline*}
By Lemma \ref{3.2.16} it is straightforward to see that we have $\bm{b}\succ\bm{a} \text{ iff } (\bm{a},\bm{b})\in \mathcal{S}_{n,r,r-1}^{*}$. Thus, if $(\bm{a},\bm{b})\in \mathcal{S}_{n,r,r-1}^{*}$, then we have $p_{k}(\bm{a})>p_k(\bm{b})$ for $k\geq 3$ by Theorem \ref{thm:general}. Hence, for $r\in\N$ and $k\geq 3$, there exists $R_{r,k}\leq r$ minimal such that for $(\bm{a},\bm{b})\in \mathcal{S}_{n,r,R_{r,k}}^{*}$, we have
\[
p_{k}(\bm{a})>p_k(\bm{b}).
\]

In another direction, one can fix $R$ and ask which $(\bm{a},\bm{b})\in \mathcal{S}_{n,r,R}^{*}$ and $k\in\N$ satisfy the inequality $p_k(\bm{a})<p_k(\bm{b})$. Using a computer, we collect some data for $R=1$ (with $r\leq 10$ and $n\leq 50$) in Table \ref{table:1}. In this case, \eqref{eqn:partialmajor} for $\bm{c}={\bm{b}^*}$ and $\bm{d}={\bm{a}^*}$ becomes $a_r> b_r$. 

For $r, n\in\N$, let $\mathcal{S}_{n,r}^{*}$ denote the set of pairs $(\bm{a},\bm{b})$ of partitions of $n$ with exactly $r$ parts satisfying $a_r>b_r$. For $k\in\N$, we then define
\begin{align*}
\mathcal{S}_{k,n,r}^{*,<}&:=\left\{(\bm{a},\bm{b})\in \mathcal{S}_{n,r}^{*}:p_k(\bm{a})<p_k(\bm{b})\right\},\\
\mathcal{S}_{k,n,r}^{*,=}&:=\left\{(\bm{a},\bm{b})\in \mathcal{S}_{n,r}^{*}:p_k(\bm{a})=p_k(\bm{b})\right\},\\
\mathcal{S}_{k,n,r}^{*,>}&:=\left\{(\bm{a},\bm{b})\in \mathcal{S}_{n,r}^{*}:p_k(\bm{a})>p_k(\bm{b})\right\}.
\end{align*}
Then we can study the quantities 
\[
\frac{\#\mathcal{S}_{k,n,r}^{*,<}}{\#\mathcal{S}_{n,r}^{*}},\quad\frac{\#\mathcal{S}_{k,n,r}^{*,=}}{\#\mathcal{S}_{n,r}^{*}},\quad \frac{\#\mathcal{S}_{k,n,r}^{*,>}}{\#\mathcal{S}_{n,r}^{*}}.
\]

Computer calculations indicate that $\mathcal{S}_{k,n,r}^{*,=}=\emptyset$ for $k\geq 4$ and that $\frac{\#\mathcal{S}_{k,n,r}^{*,>}}{\#\mathcal{S}_{n,r}^{*}}$ is much larger than $\frac{\#\mathcal{S}_{k,n,r}^{*,<}}{\#\mathcal{S}_{n,r}^{*}}$. We give some indicative data in the table below.
\begin{longtable}{|c|c|c|c|c|c|c|c|}
\caption{Some output data}
\label{table:1}\\
\hline
$n$&$r$&$k$&$\#\mathcal{S}_{n,r}^{*}$& $\#\mathcal{S}_{k,n,r}^{*,>}$& $\frac{\#\mathcal{S}_{k,n,r}^{*,>}}{\#\mathcal{S}_{n,r}^{*}}$&
$\#\mathcal{S}_{k,n,r}^{*,<}$& $\frac{\#\mathcal{S}_{k,n,r}^{*,<}}{\#\mathcal{S}_{n,r}^{*}}$\rule{0cm}{.57cm}.\\[.2em]
\hline
$13$&$3$&$4$& $91$& $87$& $\frac{87}{91}\approx 95.6\%$& $4$& $\frac{4}{91}\approx 4.4\%$\rule{0pt}{2.4ex}\\[.2em]
$17$&$3$&$5$& $276$& $262$& $\frac{262}{276}\approx 94.9\%$& $12$& $\frac{12}{276}\approx 5.1\%$\\[.2em]
$20$&$3$&$5$& $528$& $495$& $\frac{495}{528}=93.75\%$& $33$& $\frac{33}{528}=6.25\%$\\[.2em]
$20$&$4$&$4$& $2016$& $1841$& $\frac{1841}{2016}\approx 91.32\%$& $175$& $\frac{175}{2016}\approx 8.68\%$\\[.2em]
$30$&$3$&$3$& $2775$& $2566$& $\frac{2566}{2775}\approx 92.47\%$& $209$& $\frac{209}{2775}\approx 7.53\%$\\[.2em]
$30$&$3$&$4$& $2775$& $2567$& $\frac{2567}{2775}\approx 92.5\%$& $208$& $\frac{208}{2775}\approx 7.5\%$\\[.2em]
$30$&$3$&$5$& $2775$& $2566$& $\frac{2566}{2775}\approx 92.47\%$& $209$& $\frac{209}{2775}\approx 7.53\%$\\[.2em]
$30$&$3$&$6$& $2775$& $2565$& $\frac{2565}{2775}\approx 92.43\%$& $210$& $\frac{210}{2775}\approx 7.57\%$\\[.2em]
$35$&$3$&$4$& $5151$& $4724$& $\frac{4724}{5151}\approx 91.71\%$& $427$& $\frac{427}{5151}\approx 8.29\%$\\[.2em]
$35$&$3$&$5$& $5151$& $4722$& $\frac{4722}{5151}\approx 91.67\%$& $429$& $\frac{429}{5151}\approx 8.33\%$\\[.2em]
$45$&$3$&$8$& $14196$& $12943$& $\frac{12943}{14196}\approx 91.17\%$& $1253$& $\frac{1253}{14196}\approx 8.83\%$\\[.2em]
$45$&$4$&$5$& $225456$& $194593$& $\frac{194593}{225456}\approx 86.31\%$& $30863$& $\frac{30863}{225456}\approx 13.69\%$\\[.2em]
$45$&$4$&$6$& $225456$& $194571$& $\frac{194571}{225456}\approx 86.3\%$& $30885$& $\frac{30885}{225456}\approx 13.7\%$\\[.2em]
$45$&$4$&$7$& $225456$& $194539$& $\frac{194539}{225456}\approx 86.29\%$& $30917$& $\frac{30917}{225456}\approx 13.71\%$\\[.2em]
$45$&$4$&$8$& $225456$& $194508$& $\frac{194508}{225456}\approx 86.27\%$& $30948$& $\frac{30948}{225456}\approx 13.73\%$\\[.2em]
$45$&$4$&$9$& $225456$& $194466$& $\frac{194466}{225456}\approx 86.25\%$& $30990$& $\frac{30990}{225456}\approx 13.75\%$\\[.2em]
$45$&$4$&$10$& $225456$& $194425$& $\frac{194425}{225456}\approx 86.24\%$& $31031$& $\frac{31031}{225456}\approx 13.76\%$\\[.2em]
\hline

\end{longtable}

As noted above, if $(\bm{a},\bm{b})\in \mathcal{S}_{n,r,r-1}^{*}$ and $k\geq 3$, then $(\bm{a},\bm{b})\in \mathcal{S}_{k,n,r}^{*,>}$, so it is reasonable to expect that $\frac{\#\mathcal{S}_{k,n,r}^{*,>}}{\#\mathcal{S}_{n,r}^{*}}$ is larger than $\frac{\#\mathcal{S}_{k,n,r}^{*,>}}{\#\mathcal{S}_{n,r}^{*}}$. 
In view of these considerations, it is natural to ask the following questions.
\begin{enumerate}
\item Do there exist $r$ and $k$ for which $R_{r,k}<r-1$?
\item If $R_{r,k}<r-1$ for some $r$ and $k$, then how does $R_{r,k}$ grow, as a function of $r$ or $k$? 
\item For fixed $r$, can one evaluate 
\begin{equation}\label{eqn:minuslimit}
\lim_{n\to\infty} \frac{\#\mathcal{S}_{k,n,r}^{*,>}}{\#\mathcal{S}_{n,r}^{*}}
\end{equation}
and/or 
\begin{equation}\label{eqn:pluslimit}
\lim_{n\to\infty} \frac{\#\mathcal{S}_{k,n,r}^{*,<}}{\#\mathcal{S}_{n,r}^{*}}?
\end{equation}
\item Is the limit \eqref{eqn:minuslimit} (resp. \eqref{eqn:pluslimit}) equal to $1$ (resp. $0$) for $k$ sufficiently large? 
\item Can one prove that $\mathcal{S}_{k,n,r}^{*,=}=\emptyset$ for $k\geq 4$ and/or prove the weaker claim that 
\[
\lim_{n\to\infty} \frac{\#\mathcal{S}_{k,n,r}^{*,=}}{\#\mathcal{S}_{n,r}^{*}}=0?
\]
\end{enumerate}

\end{document}